\definecolor{chianti}{rgb}{0.6,0,0}
\definecolor{meretale}{rgb}{0,0,.6}
\definecolor{leaf}{rgb}{0,.35,0}
\numberwithin{equation}{section}
\newtheorem{Theoremx}{Theorem}
\newtheorem{theorem}{Theorem}[section]
\theoremstyle{definition}
\newtheorem{setting}[theorem]{Setting}
\theoremstyle{definition}
\newtheorem{lemma}[theorem]{Lemma}
\newtheorem{proposition}[theorem]{Proposition}
\newtheorem{corollary}[theorem]{Corollary}
\newtheorem{claim}[theorem]{Claim}
\theoremstyle{definition}
\newtheorem{definition}[theorem]{Definition}
\newtheorem{remark}[theorem]{Remark}
\theoremstyle{remark}
\newtheorem{example}[theorem]{Example}
\renewcommand{\ker}{\operatorname{ker}}
\newcommand{\Spec}{\operatorname{Spec}}
\newcommand{\ts}{\textsuperscript}
\newcommand{\maxSpec}{\operatorname{maxSpec}}
\newcommand{\Ht}{\operatorname{ht}}
\newcommand{\Height}{\operatorname{ht}}
\newcommand{\id}{\operatorname{id}}
\newcommand{\Ext}{\operatorname{Ext}}
\newcommand{\Supp}{\operatorname{Supp}}
\newcommand{\Hom}{\operatorname{Hom}}
\newcommand{\NN}{\mathbb{N}}
\newcommand{\coker}{\operatorname{coker}}
\newcommand{\Tr}{\operatorname{Tr}}
\newcommand{\Div}{\operatorname{div}}
\newcommand{\Frac}{\operatorname{Frac}}
\newcommand{\N}{\mathbb{N}}
\newcommand{\Z}{\mathbb{Z}}
\newcommand{\F}{\mathbb{F}}
\newcommand{\Q}{\mathbb{Q}}
\newcommand{\PP}{\mathbb{P}}
\newcommand{\fm}{\mathfrak{m}}
\newcommand{\fp}{\mathfrak{p}}
\newcommand{\fq}{\mathfrak{q}}
\newcommand{\fn}{\mathfrak{n}}
\newcommand{\fP}{\mathfrak{P}}
\newcommand{\fQ}{\mathfrak{Q}}
\newcommand{\cO}{\mathcal{O}}
\newcommand{\cP}{\mathcal{P}}
\crefname{Theoremx}{Theorem}{Theorems}
\crefname{setting}{Setting}{Settings}
\renewcommand*{\eqref}[1]{
  \hyperref[{#1}]{\textup{\tagform@{\ref*{#1}}}}
}
\begin{document}
\title{Bertini's theorem for \texorpdfstring{$F$}{F}-rational \texorpdfstring{$F$}{F}-pure singularities}

\author[De Stefani]{Alessandro De Stefani}
\address{Dipartimento di Matematica, Universit\`{a} di Genova, Via Dodecaneso 35, 16146 Genova,
Italy}
\email{alessandro.destefani@unige.it}
\urladdr{\url{https://a-destefani.github.io/ads/}}

\author[Polstra]{Thomas Polstra}
\address{Department of Mathematics, University of Alabama, Tuscaloosa, AL 35487 USA}
\email{tmpolstra@ua.edu}
\urladdr{\url{https://thomaspolstra.github.io/}}

\author[Simpson]{Austyn Simpson}
\address{Department of Mathematics, Bates College, 3 Andrews Rd, Lewiston, ME 04240 USA}
\email{asimpson2@bates.edu}
\urladdr{\url{https://austynsimpson.github.io/}}

\begin{abstract}
Let $k$ be an algebraically closed field of characteristic $p>0$, and let $X\subseteq\PP^n_k$ be a quasi-projective variety that is $F$-rational and $F$-pure. We prove that if $H \subseteq \PP^n_k$ is a general hyperplane, then $X \cap H$ is also $F$-rational and $F$-pure. Of related but independent interest, we present a relationship between the characteristic and index of a $\Q$-Gorenstein variety with isolated non-$F$-regular locus which is $F$-pure but not $F$-regular.
\end{abstract}

\maketitle

\section{Introduction}

A fundamental procedure for analyzing the properties of a quasi-projective variety $X\subseteq \PP^n_k$ is through induction on dimension, considering instead the section $X\cap H$ for a general hyperplane $H\subseteq \PP^n_k$. It is often crucial for applications, however, for the salient properties of $X$ to persist in the hyperplane $X\cap H$. In case $X$ is smooth, this is accomplished by the classical Bertini theorem, \cite[II, Theorem 8.18]{Har77}. Outside of the smooth case, there is a large suite of results---collectively referred to as \emph{Bertini theorems}---asserting that the singularities of $X\cap H$ are no worse than those of $X$.

In equal characteristic zero, Bertini theorems for the singularities of the Minimal Model Program (for example, rational, log canonical, and log terminal singularities) are well-known; see e.g. \cite[\S 4]{Kol97}, \cite[Chapter 5]{KM98}, and \cite[\S 9]{Laz04}. For their counterparts in positive characteristic (that is, for $F$-singularities), this line of inquiry was initiated by Schwede and Zhang \cite{SZ13} showing that the notions of strong $F$-regularity and $F$-purity are inherited from a variety by a general hyperplane. They also show that this phenomenon can fail if $X$ is merely assumed to be $F$-injective. On the other hand, the question of whether $F$-rationality satisfies similar types of Bertini theorems remains open. Our main theorem is an answer to this question under the hypothesis that $X$ is also $F$-pure.

\begin{Theoremx}[Bertini's theorem for $F$-purity + $F$-rationality, special case of \cref{thm:Bertini}]
    \label{thm: Bertini for F-rational singularities}
Let $X$ be a variety over an algebraically closed field $k=\overline{k}$ of characteristic $p>0$.
    \begin{enumerate}[label=(\roman*)]
        \item Suppose $X$ is projective. If $X$ is $F$-rational and $F$-pure, then so is a general hyperplane section of a very ample line bundle.
        \item More generally, if $\phi\colon X\to \mathbb{P}^n_k$ a $k$-morphism with separably generated (not necessarily algebraic) residue field extensions and $X$ is $F$-rational and $F$-pure, then there exists a non-empty open subset $U\subseteq (\mathbb{P}_k^n)^{\vee}$ so that for each hyperplane $H\in U$, $\phi^{-1}(H)$ is $F$-rational and $F$-pure.
    \end{enumerate} 
\end{Theoremx}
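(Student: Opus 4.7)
The plan is to adapt the framework of Schwede--Zhang \cite{SZ13}, which proves Bertini for strong $F$-regularity and $F$-purity. Introduce the universal hyperplane section $\mathcal{X}\subseteq X\times_k (\PP_k^n)^{\vee}$ with second projection $\pi\colon \mathcal{X}\to (\PP_k^n)^{\vee}$, whose closed fibers are the schemes $\phi^{-1}(H)$. A Bertini statement for a property $P$ then splits into two tasks: (1) the generic fiber $\mathcal{X}_{\eta}$ (over the function field $K$ of $(\PP_k^n)^{\vee}$) satisfies $P$, and (2) the locus on the base where $P$ holds is open. For $F$-purity, both (1) and (2) are already established in \cite{SZ13}, so the heart of the matter is to prove the analogues for $F$-rationality.

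For task (1), I would show $\mathcal{X}_{\eta}$ is Cohen--Macaulay (via the classical preservation of Cohen--Macaulayness under general hyperplane sections of a Cohen--Macaulay variety) and that every parameter ideal is tightly closed. The latter exploits separability of $K/k$---guaranteed by the hypothesis that $\phi$ has separably generated residue field extensions---together with transversality of the generic hyperplane at the generic point of $X$, so that tight closure of parameter ideals in $X$ descends to $\mathcal{X}_{\eta}$. For task (2), openness of the $F$-rational locus on individual schemes is classical, but openness along $\pi$ is where the $F$-pure hypothesis becomes decisive. By Hara--Watanabe, an $F$-pure, $\Q$-Gorenstein local ring of index prime to $p$ is $F$-rational if and only if it is strongly $F$-regular. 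On the strongly $F$-regular locus of $\mathcal{X}$, openness and Bertini are provided by \cite{SZ13}; the remaining locus---the non-strongly-$F$-regular points, which are nevertheless $F$-pure and $F$-rational---is what the paper's secondary theorem controls via the relationship between characteristic and $\Q$-Gorenstein index on $F$-pure, $F$-rational, non-$F$-regular singularities with isolated non-$F$-regular locus. A general hyperplane can then be arranged to meet this controlled locus only in a way compatible with $F$-rationality.

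The principal obstacle is task (2). Unlike $F$-purity and strong $F$-regularity, $F$-rationality is not known to deform cleanly in families, and in principle a hyperplane section could create new, non-tightly-closed parameter ideals. The $F$-pure hypothesis does the heavy lifting: it collapses any failure of strong $F$-regularity into a small, rigidly controlled closed subset, via the Hara--Watanabe dichotomy refined by the characteristic-versus-index theorem announced in the abstract. The technical heart of the argument is therefore the local analysis of the non-$F$-regular points of an $F$-pure, $F$-rational variety, certifying that they do not obstruct the persistence of $F$-rationality along $\pi$, after which spreading from the generic fiber to a non-empty open set of closed fibers is formal.
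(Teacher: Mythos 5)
There is a genuine gap, and it lies at the center of your proposed argument for task (2). You invoke a ``Hara--Watanabe dichotomy'' to the effect that an $F$-pure, $\Q$-Gorenstein local ring of index prime to $p$ is $F$-rational if and only if it is strongly $F$-regular. No such result holds, and in fact the paper's own Example 1.2 is a direct counterexample: $R = \bigl(k[x,y,z]/(x^3-yz(y+z))\bigr)^{\Z/3\Z}$ with $p \equiv 1 \pmod 3$ is $F$-pure, $F$-rational, $\Q$-Gorenstein of index $3$ (prime to $p$), and yet \emph{not} $F$-regular. (The theorem you may be thinking of, due to MacCrimmon, says that $\Q$-Gorenstein with index prime to $p$ upgrades \emph{weak} $F$-regularity to strong $F$-regularity; $F$-rationality is strictly weaker.) Consequently the plan of reducing to the strongly $F$-regular locus of $\mathcal{X}$ and treating the complement via \cref{mainthm:index} cannot work. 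Moreover, \cref{mainthm:index} plays no role whatsoever in the paper's proof of the Bertini theorem; it is a separate observation about the types of examples the Bertini theorem applies to, and it imports a $\Q$-Gorenstein hypothesis that \cref{thm: Bertini for F-rational singularities} does not make.

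You also underestimate task (1). What is needed is not merely that the generic fiber $\mathcal{X}_\eta$ is $F$-rational, but that it is \emph{geometrically} $F$-rational over $\kappa(\eta)$, i.e.\ that $\mathcal{X}_\eta \otimes_{\kappa(\eta)} L$ stays $F$-rational for all finite extensions $L$ of $\kappa(\eta)$, including purely inseparable ones. This is exactly where $F$-rationality alone fails (the examples of \cite{QGSS24}) and where the $F$-pure hypothesis actually does its work in the paper: \cref{B1:F-rational-F-pure} shows that $F$-rationality persists under finite purely inseparable base change \emph{provided} the base-changed ring is $F$-pure, since $F$-purity makes parameter ideals Frobenius closed and lets one push a tight-closure relation down through the $p^{e_0}$-th power map $B^{p^{e_0}} \subseteq A$. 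Your appeal to ``separability of $K/k$'' does not touch this inseparability over $K = \kappa(\eta)$ itself. Finally, the spreading step you call ``formal'' is the other genuinely hard part of the argument: the paper proves an axiom (B2) for $F$-rationality (\cref{theorem:A2} and \cref{cor:A2-F-rational}) via relative canonical modules, a uniform test element, and a stabilization argument for the Frobenius trace map à la \cite{HH00,PSZ18}; this does not follow from openness of the $F$-rational locus on individual fibers.
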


Similar to \cite{CRST21,DS22,SZ13}, our proof of \cref{thm: Bertini for F-rational singularities} is inspired by the methods introduced in \cite{CGM86} (see also \cite{Spr98}). Consider the following three axioms for a property $\cP$ of locally noetherian schemes. 

\begin{enumerate}[label=(A\arabic*)]
    \item Let $\varphi\colon Y\to Z$ be a flat morphism of $F$-finite schemes with regular fibers. If $Z$ is $\cP$ then $Y$ is $\cP$.\label{A1:intro}
    \item Let $\psi\colon Y\to S$ be a finite type morphism of $F$-finite schemes where $S$ is integral with generic point $\eta\in S$. If $Y_\eta$ is geometrically $\cP$, then there exists an open neighborhood $\eta\in U\subseteq S$ such that $Y_s$ is geometrically $\cP$ for every $s\in U$.\label{A2:intro}
    \item $\cP$ is open on schemes of finite type over a field.\label{A3:intro}
\end{enumerate}
It is then shown in \cite[Theorem 1]{CGM86} that $\cP$ transfers from a variety to its general hyperplane sections (interpreted, for example, as in \cref{thm: Bertini for F-rational singularities}) provided that $\cP$ enjoys axioms \ref{A1:intro} and \ref{A2:intro}. If $\cP$ further satisfies \ref{A3:intro} then there is a host of more general results concerning other types of linear systems, for example those which do not arise from closed immersions. We postpone these more general statements, including \emph{Bertini's second theorem}, to \cref{sec:Bertini}.

Let us briefly summarize our proof strategy which involves applying a modification of the above axioms. Let $X\subseteq\PP^n_k$ be a $k$-variety and let $Z$ be the closure of the incidence correspondence $$\{(x,H)\in \PP^n_k\times_k (\PP^n_k)^{\vee}\mid x\in H\}$$ where $(\PP^n_k)^{\vee}$ is the dual space of hyperplanes with generic point $\eta$. In essence, the axiom \ref{A1:intro} is employed in \cite{CGM86} to show that $(X\times_{\PP^n_k} Z_\eta)\otimes_{\kappa(\eta)} K$ is $\cP$ for all finite extensions $\kappa(\eta)\subseteq K$ whenever $X$ satisfies $\cP$. In the case of interest in this paper where $\cP= $ ``$F$-rational and $F$-pure," we are able to reduce this to showing the following weakening of \ref{A1:intro}:

\begin{enumerate}[label=(B\arabic*)]
    \item Let $Y$ be a scheme of finite type over a field $K$, and let $K \subseteq L$ be a finite field extension such that $Y \times_K L \to Y$ has regular fibers. If $Y$ is $\cP$ then  so is $Y \times_K L$. \label{CGM-B1-intro}
\end{enumerate}

In practice, this axiom often fails in prime characteristic when $\cP$ is some mild class of singularity due to the existence of purely inseparable residue field extensions. For example, as indicated in the table below, both $F$-injectivity and $F$-rationality can fail to satisfy \ref{CGM-B1-intro}. Bertini's theorem is known to be false for $F$-injectivity \cite[Theorem 7.5]{SZ13} and although the most general version of the $F$-rational analog is still open, the examples of \cite{QGSS24} show that the strategy of either \cite{CGM86} or of the present paper cannot be applied directly to give a positive answer.

\begin{table}[htp!]
\centering
\begin{tabular}{c llll}
    \toprule
    $\cP$&\multicolumn{2}{c}{\ref{CGM-B1-intro}}&\multicolumn{2}{c}{Bertini's Second Theorem}\\
    \cmidrule(lr){1-1} \cmidrule(lr){2-3} \cmidrule(lr){4-5}
    strongly $F$-regular&True&\cite[Corollary 4.6]{SZ13}&True&\cite[Theorem 6.1]{SZ13}\\
    $F$-pure&True&\cite[Proposition 4.8]{SZ13}&True&\cite[Theorem 6.1]{SZ13}\\
    $F$-injective&False&\cite[Proposition 4.2]{Ene09}&False&\cite[Theorem 7.5]{SZ13}\\
    $F$-rational&False&\cite[Theorem 1.1]{QGSS24}&\multicolumn{2}{c}{\textbf{open}}\\
    \hdashline[2pt/5pt]
    $F$-rational and $F$-pure&True&\cref{B1:F-rational-F-pure} &True&(\cref{thm: Bertini for F-rational singularities})\\
    \bottomrule
    \end{tabular}
\end{table}

On the other hand, the rings constructed in \cite{QGSS24} inform our strategy for proving \cref{thm: Bertini for F-rational singularities} in that they are \emph{not} $F$-pure. More concretely, standard graded $\F_p(t)$-algebras $R$ are constructed in \emph{op. cit.} in which $R$ is $F$-rational but $R\otimes_{\F_p(t)} \F_p(t^{1/p})$ is not even $F$-injective. 

With the notation as above, if we further require that $R$ be $F$-pure then we show in this paper that the base change $R\otimes_{\F_p(t)} \F_p(t^{1/p})$ \textit{will} be $F$-rational and $F$-pure, and in fact that the axiom \ref{CGM-B1-intro} holds for this property (\cref{B1:F-rational-F-pure}). We then show that the ``spreading out" axiom \ref{A2:intro} holds for $F$-rationality to conclude that a general hyperplane section of $X$ is $F$-rational and $F$-pure.

Since strong $F$-regularity is preserved by a general hyperplane section by \cite[Theorem 6.1]{SZ13} and implies both $F$-rationality and $F$-purity, the only novel situations to which \cref{thm: Bertini for F-rational singularities} may be applied concern non-$F$-regular varieties which are $F$-pure and $F$-rational. We emphasize that such examples are abundant, even in the case of surfaces (see \cite{Wat91}). For concreteness, consider:

\begin{example} Let $k$ be an algebraically closed field of characteristic $p\equiv 1\pmod 3$ and let $\gamma\in k$ be a cube root of unity. Then the invariant subring
\begin{align*}
    R = \left(\frac{k[x,y,z]}{(x^3-yz(y+z)}\right)^{\Z/3\Z}
\end{align*}
is $F$-rational, $F$-pure, but not $F$-regular \cite[Example 7.16]{HH94b}. Here, the action of $\Z/3\Z$ is taken to be the one generated by $$\sigma\colon x\mapsto x,y\mapsto \gamma y, z\mapsto \gamma z.$$
\end{example} The above example is $\Q$-Gorenstein of index three, and the assumption that $p\equiv 1\pmod 3$ is necessary for the ring to be $F$-pure. We find this to be an instance of a broader phenomenon witnessed by $\Q$-Gorenstein isolated singularities satisfying the assumptions of \cref{thm: Bertini for F-rational singularities}.

\begin{Theoremx}\label{mainthm:index} (= \cref{prop: char cong to 1 mod p,cor:f-rational-not-f-regular})
    Let $(R,\fm,k)$ be an $F$-finite local ring of prime characteristic $p>0$ and $\mathbb{Q}$-Gorenstein of index $n$. If $R$ is $F$-pure, not $F$-regular, and if $R_{\fp}$ is $F$-regular for all non-maximal prime ideals $\fp\in\Spec(R)$ (for example, if $R$ has an isolated singularity), then $p\equiv 1\pmod{n}$. If $R$ is further assumed to be $F$-rational, then $p\neq 2$.
\end{Theoremx}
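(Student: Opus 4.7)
The second statement (that $p \neq 2$ under the additional hypothesis of $F$-rationality) should follow quickly from the first: if $R$ were Gorenstein, so $n=1$, then $F$-rationality would upgrade to strong $F$-regularity by a classical theorem, contradicting the non-$F$-regularity hypothesis. Hence $n \geq 2$, and combining with $p \equiv 1 \pmod{n}$ forces $p \geq n+1 \geq 3$, so $p \neq 2$. I would present this reduction first and then devote the rest of the argument to the main congruence.

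For the main assertion $p \equiv 1 \pmod{n}$, my plan is to exploit the Grothendieck duality identification $\Hom_R(F_*R, R) \cong F_*\omega_R^{(1-p)}$, valid for $F$-finite normal local rings admitting a canonical module. Under this identification an $F$-splitting $\phi\colon F_*R \to R$ corresponds to an element $\alpha \in F_*\omega_R^{(1-p)}$ whose evaluation pairing with $F_*1$ is a unit. Because $\omega_R^{(n)} \cong R$, the reflexive module $\omega_R^{(1-p)}$ depends only on $(1-p) \bmod n$, and its class in $\Cl(R)$ has order $n/\gcd(n,p-1)$. The goal reduces to showing this class is trivial, equivalently that $n \mid p-1$.

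The strategy to verify this is to pass to the index-one (canonical) cover $S$ associated to the torsion class $K_R \in \Cl(R)$: a module-finite, quasi-Gorenstein extension of $R$ carrying a natural $\Z/n$-grading with $S_0 = R$. The isolated non-$F$-regular locus hypothesis forces $\Spec S \to \Spec R$ to be étale away from the unique preimage of $\fm$. A first sub-step rules out $p \mid n$ — wild ramification of the cover would be incompatible with $F$-purity of $R$ at $\fm$ — so that $\mu_n$ acts tamely and $F$-purity transfers between $R$ and $S$. The Frobenius on $S$ then acts on $\Z/n$-weights by multiplication by $p$, sending $S_i$ into $F_*S_{pi \bmod n}$.

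The crux, and the main obstacle, is to argue that compatibility of an $F$-splitting of $R = S_0$ with the $\mu_n$-isotypic decomposition of $F_*S$ — in the regime where $R$ is $F$-pure but not $F$-regular, so that the ``slack'' available to splittings at $\fm$ is minimal — forces multiplication by $p$ to act as the identity on $\Z/n$. Extracting this arithmetic constraint from the interplay of the grading, the $\mu_n$-action, and the failure of $F$-regularity at $\fm$ is the delicate technical point; the isolation of the non-$F$-regular locus is essential, because without it one could imagine the grading mixing being compensated by worse singular behavior in higher codimension.
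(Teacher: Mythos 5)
Your reduction of the second assertion ($p \neq 2$) to the first is exactly the paper's argument: if $R$ were Gorenstein then $F$-rationality would coincide with strong $F$-regularity (\cite[Theorem 4.2]{HH94a}), forcing $n \geq 2$, and $p \equiv 1 \pmod n$ with $n\geq 2$ rules out $p=2$. That part is correct.

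For the main congruence $p \equiv 1 \pmod n$, however, your canonical-cover approach is genuinely different from the paper's, and it contains an unfilled gap that you yourself flag. You set up the index-one cover $S$ with its $\mathbb{Z}/n$-grading and observe that Frobenius multiplies weights by $p$, but the \emph{crux} — that the existence of an $F$-splitting of $R=S_0$, in the presence of the failure of $F$-regularity only at $\fm$, forces multiplication by $p$ to act as the identity on $\mathbb{Z}/n$ — is exactly the content of the theorem, and you do not prove it; you only describe it as ``the delicate technical point.'' The setup alone does not yield the result. An additional issue is your claim that the isolated non-$F$-regular hypothesis forces $\Spec S \to \Spec R$ to be \'etale away from $\fm$: the cyclic cover is \'etale over the locus where $K_R$ is Cartier (granting $p\nmid n$), and strong $F$-regularity of $R_\fp$ does not imply that $K_R$ is Cartier at $\fp$, so this intermediate assertion also needs justification.

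The paper's proof avoids canonical covers altogether. It uses the Aberbach–Enescu splitting-prime $\fP = \bigcap_e I_e(\fm)$: the isolated non-$F$-regular hypothesis plus $F$-purity force $\fP = \fm$ and hence $I_e(\fm) = \fm$ for all $e$, which pins down the exact number of free $R$-summands of $F^e_*R$ to be $[k^{1/p^e}:k]$. It then tensors decompositions of $F^{e_0}_*R$ and $F^{e-1}_*R$ with suitable reflexive divisorial modules $R(jK_R)$, reflexifies, applies $\Hom_R(-,R)$ or $\Hom_R(-,R(K_R))$, and invokes \cref{thm:direct sums} to show that if $(p^e-1)K_R \not\sim 0$ (first step) or $(p-1)K_R \not\sim 0$ (second step) then some $F^{e'}_*R$ would carry strictly more than $[k^{1/p^{e'}}:k]$ free summands, a contradiction. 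This free-rank counting argument is what replaces the ``arithmetic constraint from the $\mu_n$-action'' that your proposal leaves open, so the gap in your proposal is precisely where the paper's new content lives.
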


The varieties we are considering in \cref{thm: Bertini for F-rational singularities} should be thought of as the counterparts of varieties over the complex numbers which are log canonical and rational but \emph{not} log terminal.\footnote{In the literature this notion is sometimes referred to as \emph{strict log canonicity}; see \cite{Fuj16} and the references therein.} In particular, \cref{mainthm:index} should be compared with the work of Ishii \cite{Ish00} which finds certain restrictions on the $\Q$-Gorenstein index of an isolated strictly log canonical singularity.

\subsection*{Acknowledgments} We thank Karl Schwede for inspiring conversations and for explaining many features of \cite{PSZ18} to us. We are also grateful to Linquan Ma for pointing out an inaccuracy in a previous version of this article and for helpful discussions. De Stefani was partially supported by the MIUR Excellence Department Project CUP D33C23001110001, PRIN 2022 Project 2022K48YYP, and by INdAM-GNSAGA. Polstra was supported by a grant from the National Science Foundation DMS \#2502317. Simpson was supported by NSF postdoctoral fellowship DMS \#2202890.

\section{Preliminaries}
\subsection{Notation and definitions}
Throughout this paper, all schemes $X$ are assumed to be noetherian, separated, and have characteristic $p$ where $p>0$ is a prime integer. For a point $x\in X$, we denote by $\kappa(x)$ the residue field of the local ring $\cO_{X,x}$. We say that $X$ is \emph{$F$-finite} if the $e$\ts{th} iterated (global) Frobenius $F^e\colon X\to X$ is a finite morphism for some or all $e>0$. By a \emph{variety}, we mean an integral scheme $X$ which is separated and of finite type over an algebraically closed field. In particular, all varieties in this article are $F$-finite.

If $\cP$ is a property of noetherian local rings, then we say that a scheme $X$ satisfies $\cP$ if the local rings $\cO_{X,x}$ satisfy $\cP$ for all $x\in X$. Moreover, if $X\to\Spec k$ is a map of schemes where $k$ is a field, then we say that $X$ is \emph{geometrically $\cP$ over $k$} if $X\times_k \ell$ is $\cP$ for every finite extension $k\subseteq \ell$. If there is no ambiguity as to which field $k$ we are referencing, we will simply say that $X$ is \emph{geometrically $\cP$} in such a circumstance. Note that $F$-finiteness of $X$ is preserved by finite base changes $X\times_k \ell$.

We now recall the various notions of $F$-singularities of interest in this paper. Denote by $R^\circ$ to be the set of ring elements not contained in any minimal prime of $R$. When $R$ is reduced, we will denote by $R^{1/p^e}$ the ring of $p^e$\ts{th} roots of elements in $R$ inside an algebraic closure of the total ring of fractions of $R$. Note that the $e$\ts{th} iterate $F^e$ of the Frobenius map may be identified with the natural inclusion $R \hookrightarrow R^{1/p^e}$, which in turn can be identified with the map $R\to F^e_* R$ to the restriction of scalars along $F^e$.
\begin{definition}
    Let $(R,\fm)$ be a $d$-dimensional reduced $F$-finite local ring. We say that $R$ is
\begin{enumerate}[label=(\alph*)]
    \item \emph{$F$-injective} if, for each $i\leq d$, the induced Frobenius map $F\colon H^i_\fm(R)\to H^i_\fm(R)$ on local cohomology is injective;
    \item \emph{$F$-rational} if $R$ is Cohen--Macaulay and the tight closure of zero
    \[
    0^*_{H^d_\fm(R)} = \{\eta \in H^d_\fm(R) \mid \exists c \in R^\circ \text{ such that } cF^e(\eta) = 0\}
    \]
    is the zero submodule of $H^d_\fm(R)$;
    \item \emph{$F$-pure} if the inclusion $F\colon R\hookrightarrow R^{1/p}$ splits, i.e. if there exists an $R$-linear map $\varphi\colon R^{1/p}\to R$ such that $\varphi\circ F = \id_R$;
    \item \emph{strongly $F$-regular} if for each $c\in R^\circ$, there exists an integer $e\gg 0$ such that the map $R\hookrightarrow R^{1/p^e}$ sending $1\to c^{1/p^e}$ splits.
\end{enumerate}
\end{definition}

We will have occasional use for equivalent formulations of some of the notions above in terms of ideal closures. Let $(R,\fm)$ be an $F$-finite reduced local ring as above and let $I\subseteq R$ be an ideal. For each prime power $q=p^e$ denote $I^{[q]}:=F^e(I)R$. Then the \emph{Frobenius closure of $I$}, denoted $I^F$, is the ideal consisting of ring elements $r\in R$ for which $r^q\in I^{[q]}$ for all $q\gg 0$. Similarly, the \emph{tight closure} of $I$, denoted $I^*$, is the ideal
$$I^* = \{r\in R\mid \exists c\in R^\circ\text{ such that } cr^{q}\in I^{[q]}\text{ for all } q\gg 0\}.$$ Then $R$ is $F$-pure if and only if $I=I^F$ for all ideals $I\subseteq R$ \cite{Hoc77}. Similarly, $R$ is $F$-rational if and only if $R$ is Cohen--Macaulay and $I = I^*$ for some (or all) ideal(s) $I\subseteq R$ generated by a regular sequence \cite{HH94a}.

In the $F$-finite case all the notions introduced above are known to be open conditions which localize well (see e.g. \cite[Section 6]{MP25} for a summary), and the definitions thus extend in the obvious way to arbitrary noetherian rings and schemes.

We next recall the following regular (resp. smooth) base change results for $F$-purity and $F$-rationality.
\begin{theorem}\label{thm:F-sing-base-change}
    Let $(R,\fm)\to (S,\fn)$ be a flat local ring homomorphism between excellent local rings of prime characteristic $p>0$. Then:
\begin{enumerate}[label=(\roman*)]
    \item If $R$ is $F$-pure and $S/\fm S$ is a regular local ring (or more generally, Gorenstein and $F$-pure), then $S$ is $F$-pure;\label{thm:F-pure-base-change}
    \item If $R$ is $F$-rational and $S/\fm S$ is geometrically regular over $R/\fm$, then $S$ is $F$-rational. \label{thm:F-rat-base-change}
\end{enumerate}
\end{theorem}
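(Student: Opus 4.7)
The plan is to prove both parts by combining a Frobenius splitting or tight closure argument on $R$ with a base-change analysis along the closed fiber.

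For part (i), I would exploit the splitting characterization of $F$-purity together with a factorization of Frobenius through the relative Frobenius. Since $R$ is $F$-pure, there exists an $R$-linear map $\varphi\colon F^e_*R \to R$ splitting the $e$\ts{th} Frobenius for each $e \geq 1$ (in the non-$F$-finite case, Frobenius is merely pure, but the same strategy applies). Tensoring along the flat morphism $R\to S$ produces an $S$-linear splitting $\varphi_S\colon F^e_*R \otimes_R S \to S$ of the natural map $S \to F^e_*R \otimes_R S$. The absolute Frobenius on $S$ factors as
\[
S \longrightarrow F^e_*R \otimes_R S \xrightarrow{\;F^e_{S/R}\;} F^e_*S,
\]
so producing a splitting of $S \to F^e_*S$ reduces to splitting the relative Frobenius $F^e_{S/R}$ as an $S$-module map. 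The hypothesis that $S/\fm S$ is Gorenstein and $F$-pure (which subsumes the regular case) allows this last step via a Fedder-type criterion on the fiber together with local duality; the Gorenstein hypothesis is essential here to recast $F$-purity as a property of a single element in the canonical module.

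For part (ii), first note that $S$ is Cohen--Macaulay: $R$ is Cohen--Macaulay by $F$-rationality, and the geometrically regular fiber is Cohen--Macaulay, so Cohen--Macaulayness ascends along the flat local map. To verify $F$-rationality, I would use the parameter-ideal criterion recalled above. Pick a system of parameters $\mathbf{x}=(x_1,\ldots,x_d)$ of $R$ and lift a system of parameters of $S/\fm S$ to elements $\mathbf{y}=(y_1,\ldots,y_n)\subseteq \fn$, so that $(\mathbf{x},\mathbf{y})S$ is a parameter ideal of $S$. To show $((\mathbf{x},\mathbf{y})S)^{*} = (\mathbf{x},\mathbf{y})S$, I would factor the argument through $S/\mathbf{x}S$, which is flat over $R/\mathbf{x}R$ with the same geometrically regular fiber. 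Persistence of tight closure, colon-capturing along the parameter sequence $\mathbf{x}$, and the fact that geometrically regular rings are strongly $F$-regular would then reduce the claim to the $F$-rationality of $R$.

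The main obstacle is in part (ii): tight closure does not commute with arbitrary flat base changes, and purely inseparable residue field extensions can destroy $F$-rationality, as demonstrated by the counterexamples in \cite{QGSS24}. The \emph{geometrically} regular hypothesis, strictly stronger than regularity alone, is precisely the ingredient needed to rule out these pathologies by enforcing separability of the fiber over the base residue field. Carefully threading this hypothesis through the tight closure computation, and in particular ensuring that the relevant closures agree before and after base change, is the delicate technical step.
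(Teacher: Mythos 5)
The paper does not actually prove \cref{thm:F-sing-base-change}---the ``proof'' in the text is a pointer to the literature (\cite{Ma14}, \cite{MP25}, \cite{Vel95}, \cite{Ene00})---so there is no internal argument against which to check your proposal; it must stand on its own. For part (i), the reduction to splitting the relative Frobenius $F^e_{S/R}\colon F^e_*R\otimes_R S\to F^e_*S$ is the right frame and matches the spirit of the cited arguments, but the decisive step---that Gorenstein-ness plus $F$-purity of $S/\fm S$ produces a splitting of $F^e_{S/R}$---is asserted rather than carried out. Note the fiber of $F^e_{S/R}$ over $\fm$ has source $F^e_*(R/\fm^{[p^e]})\otimes_{R/\fm}(S/\fm S)$, which is not the source of the absolute Frobenius on $S/\fm S$; one must work through how the Gorenstein hypothesis encodes the splitting as a condition involving the canonical module and then lift across the flat local map. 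Writing ``Fedder-type criterion plus local duality'' names the tools but is a statement of intent, not a proof; it is precisely here that the theorem has its content.

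For part (ii), the sketch contains a directional error that would derail the argument. ``Persistence of tight closure'' sends $I^*$ into $(IS)^*$ along $R\to S$; this is the opposite of what is needed, since given $z\in((\mathbf{x},\mathbf{y})S)^*$ there is no element of $R$ to descend to---$z$ need not lie in $R$ at all. The proposed passage to $S/\mathbf{x}S$, a flat extension of the Artinian ring $R/\mathbf{x}R$, does not clearly help: tight closure in Artinian rings is degenerate, and geometric regularity of the fiber does not convert in any evident way into a tight-closure statement about $S/\mathbf{x}S$. The proofs in the references proceed differently: either via a test-element transfer argument, where geometric regularity (rather than mere regularity) of the fiber is exactly what guarantees a parameter test element of $R$ stays a parameter test element for $S$ via separability and the Jacobian criterion, or via the dual, trace-map characterization of $F$-rationality combined with base change of relative canonical modules---the very machinery this paper develops later in \cref{prop: global Frobenius trace}, \cref{lemma base change}, and \cref{key lemma}. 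Your colon-capturing sketch would need to be replaced by, or carefully fitted into, one of these frameworks before it can be considered a proof.
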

\begin{proof}
    There are many references for this; see e.g. \cite[Proposition 5.4]{Ma14} or \cite[Theorem 7.4]{MP25} for \ref{thm:F-pure-base-change} and \cite[Theorem 3.1]{Vel95} or \cite[Theorem 7.8]{MP25} for \ref{thm:F-rat-base-change}. See also \cite{Ene00}.
\end{proof}

\subsection{The dual of Frobenius and \texorpdfstring{$F$}{F}-rationality}

Our analysis of $F$-rationality in a family relies upon a well-known dual characterization of $F$-rationality presented in terms of Cartier linear maps of a canonical module for which we lack a convenient reference outside of the local case. If $R$ is an $F$-finite domain of prime characteristic $p>0$, then $R$ is the homomorphic image of an $F$-finite regular ring $S$ and $R\cong S/\fP$ for some prime ideal $\fP\in \Spec(S)$ by \cite{Gab04}.  If $h= \dim(S_{\fP})$ then $\Ext_S^h(R,S)$ and $\Ext_S(F^e_*R,S)$ are identifications of global canonical modules of $R$ and $F^e_*R$ respectively. The rings $R$ and $F^e_*R$ are abstractly isomorphic, hence $\Ext_S^h(F^e_*R,S) \cong F^e_*\omega_R$. The resulting map $\Tr^e\colon F^e_*\omega_R\to \omega_R$, $\Tr^e_R$, or $\Tr^e_{R/S}$ if we wish to specify the ring $R$ or the rings $R$ and $S$, is obtained by applying $\Ext^h_S(-,S)$ to $R\to F^e_*R$ and is an identification of the $e$\ts{th} Frobenius dual. The $e$\ts{th} Frobenius dual map is unique up to non-unique isomorphism. Indeed, if $\fp\in \Spec(R)$, then by local duality, the Matlis dual of $(\Tr^{e}_R)_{\fp}\colon F^{e}_*\omega_{R_{\fp}}\to \omega_{R_{\fp}}$ is the Frobenius map of local cohomology modules $H^{\Height(\fp)}_{\fp R_{\fp}}(R_{\fp})\xrightarrow{F^e}H^{\Height(\fp)}_{\fp R_{\fp}}(F^e_*R_{\fp})\cong F^e_*H^{\Height(\fp)}_{\fp R_{\fp}}(R_{\fp})$. Therefore the completion of $(\Tr^{e}_R)_{\fp}$ is the Matlis dual of the $e$\ts{th} Frobenius map on the top local cohomology of $R_\fp$, hence $(\Tr^{e}_R)_{\fp}$ is uniquely determined up to isomorphism locally, and therefore globally.

If $R$ is an $F$-finite domain, then the $F$-rationality of $R$ can be determined through examining pre-multiplication of an iterate of the Frobenius dual by a (parameter) test element. If $c$ is a parameter test element of a $d$-dimensional local ring $(R,\fm,k)$, then $R$ is $F$-rational if and only if $R$ is Cohen--Macaulay and the $0$-submodule of $H^d_{\fm}(R)$ is tightly closed by \cite[Proposition~2.5 and Theorem~2.6]{Smi97}. By standard arguments, $0^*_{H^d_{\fm}(R)} = 0$ if and only if there exists a parameter test element $c\in R^{\circ}$ and $e\in\NN$ so that $$H^d_{\fm}(R)\xrightarrow{F^e}F^e_*H^d_{\fm}(R)\xrightarrow{\cdot F^e_*c}F^e_*H^d_{\fm}$$ is injective. See \cite[Proof of Proposition~6.3]{MP25} for more details. (Parameter) test elements are known to exist in our setting by \cite[Theorem~5.10]{HH94a}.

A detailed treatment of the following in the local case can be found in \cite[Proposition~6.3]{MP25}. We provide the necessary details for the non-local case.

\begin{proposition}[{cf. \cite{Vel95}}]
     \label{prop: global Frobenius trace}
    Let $R$ be an $F$-finite Cohen--Macaulay domain of prime characteristic $p>0$ and $\Tr^e\colon F^e_*\omega_R\to \omega_R$ an identification of the $e$\ts{th} Frobenius dual. Let $c\in R^{\circ}$ be a parameter test element of $R$. Then $R$ is $F$-rational if and only if $R$ is Cohen--Macaulay and there exists $e_0\in\NN$ so that 
    \[
    \Tr^{e_0}_R(F^{e_0}_*c-) = F^{e_0}_*\omega_R\xrightarrow{\cdot F^{e_0}_*c}F^{e_0}_*\omega_R\xrightarrow{\Tr^{e_0}}\omega_R
    \]
    is surjective.
\end{proposition}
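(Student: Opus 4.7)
The plan is to reduce the global statement to the local case established in \cite[Proposition 6.3]{MP25} via a coherence-and-Noetherianity argument on $\Spec R$, combined with an iteration identity for the Frobenius trace. Writing $\Psi_e := \Tr^e_R \circ (F^e_* c \cdot -)\colon F^e_*\omega_R \to \omega_R$, I would first record that $F$-finiteness makes $\omega_R$ (and hence $F^e_*\omega_R$) finitely generated, that parameter test elements localize in the $F$-finite setting, and that $(\Tr^e_R)_\fp$ agrees with the Frobenius trace of $R_\fp$ (as noted in the discussion preceding the statement). Consequently $C_e := \coker \Psi_e$ is a finitely generated $R$-module, so the locus $U_e := \{\fp \in \Spec R : (\Psi_e)_\fp \text{ is surjective}\} = \Spec R \setminus \Supp C_e$ is a Zariski-open subset of $\Spec R$.

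The reverse direction is then immediate: surjectivity of $\Psi_{e_0}$ localizes to every stalk $(\Psi_{e_0})_\fp$, and the local version applied to each $R_\fp$ with the parameter test element $c$ yields $F$-rationality of $R_\fp$ for every $\fp$, hence of $R$.

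For the forward direction, assuming $R$ is Cohen--Macaulay and $F$-rational, each $R_\fp$ is $F$-rational, and the local version supplies some $e_\fp$ with $\fp \in U_{e_\fp}$; in particular $\bigcup_e U_e = \Spec R$. The remaining task is to produce a uniform $e_0$. The key input is the composition identity $\Tr^{2e} = \Tr^e \circ F^e_*\Tr^e$, which after a short unpacking gives $\Psi_{2e}(F^{2e}_* x) = \Tr^e(F^e_* \Psi_e(F^e_*x))$. Since $F$-rationality implies $F$-injectivity, $\Tr^e$ is surjective for every $e$ (via local duality at each point); combined with the above identity this forces $U_e \subseteq U_{2e}$, and a straightforward induction on $n$ using $\Tr^{(n+1)e} = \Tr^e \circ F^e_*\Tr^{ne}$ promotes this to $U_e \subseteq U_{ne}$ for every $n \geq 1$. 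The sequence $\{U_{n!}\}_{n \geq 1}$ is then an increasing chain of opens whose union is all of $\Spec R$, and Noetherianity of $\Spec R$ forces $U_{N!} = \Spec R$ for some $N$; so $e_0 := N!$ works.

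The main obstacle I anticipate is the monotonicity step $U_e \subseteq U_{ne}$: a purely stalkwise application of the local case only produces integers $e_\fp$ depending on $\fp$, and promoting this to a single uniform $e_0$ is exactly the content that goes beyond the local version. The trace composition identity together with the $F$-injectivity supplied by $F$-rationality is the cleanest route I see to secure the needed inclusions.
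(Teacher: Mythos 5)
Your proof is correct and follows essentially the same strategy as the paper's: reduce to the local characterization of $F$-rationality via Matlis duality, observe that the surjectivity locus $U_e$ of $\Tr^e(F^{e}_*c-)$ is open because the cokernel is finitely generated, show that the chain of open sets eventually covers $\Spec(R)$, and conclude by compactness of the Noetherian space $\Spec(R)$.

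The one place you diverge is the monotonicity mechanism. The paper gets the cleaner inclusion $U_{e} \subseteq U_{e'}$ for \emph{every} $e' \geq e$ directly: if $cF^e$ is injective on local cohomology then so is $F^{e'-e}(cF^e) = c^{p^{e'-e}}F^{e'}$ (using $F$-injectivity), and since $c$ divides $c^{p^{e'-e}}$ this forces $cF^{e'}$ to be injective; dualizing gives the surjectivity of $\Tr^{e'}(F^{e'}_*c-)$. You instead use the trace composition $\Tr^{2e} = \Tr^e \circ F^e_*\Tr^e$ together with surjectivity of $\Tr^e$ (again from $F$-injectivity) to obtain only $U_e \subseteq U_{ne}$, and then patch the "multiples only" restriction with the $\{U_{n!}\}$ chain. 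Both are correct; the paper's version is marginally more economical since it avoids the factorial trick and gives a genuinely nested chain $\{U_e\}_{e\ge 1}$ from the start, but your route via the functoriality of the trace is equally valid and perhaps more conceptual.
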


\begin{proof}
    Let $\fp\in\Spec(R)$. The local ring $R_{\fp}$ is $F$-rational if and only if $R_{\fp}$ is Cohen--Macaulay and $$0^*_{H^{\Height(\fp)}_{\fp R_{\fp}}(R_{\fp})} = 0$$ by \cite[Proposition~2.5 and Theorem~2.6]{Smi97}. If $c\in R^{\circ}$ is a parameter test element then $R_{\fp}$ is $F$-rational if and only if there exists $e_{\fp}\in \NN$ so that $$H^{\Height(\fp)}_{\fp R_{\fp}}(R_{\fp})\xrightarrow{F^{e_{\fp}}} F^{e_{\fp}}_*H^{\Height(\fp)}_{\fp R_{\fp}}(R_{\fp})\xrightarrow{\cdot F^{e_{\fp}}_*c} F^{_{\fp}}_*H^{\Height(\fp)}_{\fp R_{\fp}}(R_{\fp})$$ is injective. By Matlis duality, $R_{\fp}$ is $F$-rational if and only if there exists $e_{\fp}\in \NN$ so that 
    \[
    \Tr^e(F^{e_{\fp}}_*c-)_{\fp}\colon F^{e_{\fp}}_*\omega_{R_{\fp}}\xrightarrow{\cdot F^{e_{\fp}}_*c}F^{e_{\fp}}_*\omega_{R_{\fp}} \xrightarrow{\cdot \Tr^{e_{\fp}}}\omega_{R_{\fp}}
    \]
    is surjective. Hence if there exists $e_0\in\NN$ so that $\Tr^{e_0}_R(F^{e_{0}}_*c-)$ is surjective globally, then $R$ is $F$-rational. 
    
    Conversely, if $R$ is $F$-rational, then for each $\fp\in\Spec(R)$ there exists $e_{\fp}\in\NN$ so that $\Tr^{e_{\fp}}(F^e_*c-)_{\fp}$ is surjective. Equivalently, the cokernel of $\Tr^{e_{\fp}}(F^e_*c-)$ is not supported at $\fp$. Therefore there exists an open neighborhood $U$ of $\fp\in\Spec(R)$ so that for all $\fq\in U$ the localized map $\Tr^{e_{\fp}}(F^{e_{\fp}}_*c-)_{\fq}$ is surjective. If $e\geq e_{\fp}$ then $\Tr^e(F^e_*c-)_{\fq}$ is surjective. By quasi-compactness of $\Spec(R)$, there exists an $e_0\in \NN$ so that $\Tr^{e_0}(F^{e_0}_*c-)$ is surjective locally, and therefore globally.
\end{proof}

\section{A modification of the Cumino-Greco-Manaresi axioms}\label{sec:CGM}

The article \cite{CGM86} provides an axiomatic framework for establishing Bertini theorems for a property $\cP$ of a quasi-projective variety over an algebraically closed field. The main result of \cite{CGM86} is that the following axioms for a property $\cP$ of locally noetherian schemes imply Bertini theorems for $\cP$:
\begin{enumerate}[label=(A\arabic*)]
    \item Let $\varphi\colon Y \to Z$ be a flat morphism with regular fibers. If $Z$ has property $\cP$, then so does $Y$. \label{CGM-1}
    \item Let $\rho\colon Y \to S$ be a finite type morphism of schemes where $Y$ is excellent and $S$ is integral with generic point $\eta \in S$. If $Y_\eta$ is geometrically $\cP$, then there exists an open neighborhood $\eta \in U \subseteq S$ such that $Y_s$ is geometrically $\cP$ for every $s \in U$. \label{CGM-2}
    \item The property $\cP$ is open on schemes of finite type over a field. \label{CGM-3}
\end{enumerate}

Axiom \ref{CGM-2} has been studied extensively for various properties $\cP$; see, for example, \cite[Appendix~E.1]{GW20} and \cite[\S9]{EGA}. 

A first approach to \cref{thm:Bertini} would be to verify that the property $\cP =$ ``$F$-rational and $F$-pure'' satisfies \ref{CGM-1}, \ref{CGM-2}, and \ref{CGM-3}. Indeed, \ref{CGM-3} holds since for a quasi-projective variety $X$ over an algebraically closed field, both the $F$-rational and $F$-pure loci are open subsets of $X$. The axiom \ref{CGM-2} is satisfied by $F$-purity by \cite[Corollary~5.2]{SZ13}. Moreover, it follows from results in \cite{PSZ18} that $F$-rationality satisfies a weaker version of \ref{CGM-2} (see \ref{CGM-B2} discussed below), which would still yield \cref{thm:Bertini} given the other two axioms. 

However, it is not clear whether the property $\cP=$ ``$F$-rational and $F$-pure'' satisfies \ref{CGM-1}. Without the $F$-pure hypothesis, $F$-rationality can fail to satisfy \ref{CGM-1} by \cite{QGSS24}. We remark, however, that axiom \ref{CGM-1} is stronger than what is generally necessary to prove Bertini theorems; see \cite[Remark (i)]{CGM86}. Based on this observation, we propose the following variation of the axioms \ref{CGM-1} and \ref{CGM-2}:

\begin{enumerate}[label=(B\arabic*)]
\setcounter{enumi}{-1}
    \item If $Y$ is a scheme of finite type over a field $K$, and $Y$ is $\cP$, then so is $Y \times_K \Spec(K[x])$. \label{CGM-B0}
    \item Let $Y$ be a scheme of finite type over a field $K$, and let $K \subseteq L$ be a finite field extension such that $Y \times_K L \to Y$ has regular fibers. If $Y$ is $\cP$ then  so is $Y \times_K L$. \label{CGM-B1}
    \item Let $Y \to S$ be a finite type morphism of $F$-finite schemes where $S$ is integral with generic point $\eta \in S$. Suppose additionally that $S$ is of finite type over an algebraically closed field. If $Y_\eta$ is geometrically $\cP$, then there exists an open neighborhood $\eta \in U \subseteq S$ such that $Y_s$ has property $\cP$ for every closed point $s \in U$. \label{CGM-B2}
\end{enumerate}

We now show that the axioms \ref{CGM-B0}, \ref{CGM-B1} \ref{CGM-B2}, together with \ref{CGM-3}, yield Bertini theorems as in \cite[Theorem 1, Corollary1, Corollary 2]{CGM86}.

\begin{theorem}\label{thm:CGM}
    Let $\cP$ be a local property of noetherian $F$-finite rings, and let $k=\overline{k}$ be an algebraically closed field of characteristic $p>0$. For a scheme $Z$, let $\cP(Z)$ denote the locus of points of $Z$ satisfying $\cP$.
    \begin{enumerate}[label=(\alph*)]
    \item \label{thm:CGM-1} Let $X$ be scheme of finite type over $k$. Let $\phi\colon X\to\PP^n_k$ be a morphism with separably generated (not necessarily algebraic) residue field extensions. Suppose $\cO_{X,x}$ satisfies $\cP$ for all $x\in X$ and that $\cP$ satisfies \ref{CGM-B0}, \ref{CGM-B1} and \ref{CGM-B2}. Then there exists a nonempty open subset $U\subseteq (\PP^n_k)^\vee$ such that the local rings of $\phi^{-1}(H)$ satisfy $\cP$ for all $H\in U$.
        \item \label{thm:CGM-2} Let $V$ be an algebraic variety over $k$ and let $S$ be a finite dimensional linear system on $V$. Assume that the map $V\dashrightarrow\PP^n_k$ corresponding to $S$ induces separably generated field extensions wherever it is defined. If $\cP$ satisfies \ref{CGM-B0}, \ref{CGM-B1}, \ref{CGM-B2}, and \ref{CGM-3}, then the general element of $S$, considered as a subscheme of $V$, satisfies $\cP$ except perhaps at the base points of $S$ and at the complement of $\cP(V)$.\label{cgm-thm-2}
        \item  \label{thm:CGM-3} Let $X\subseteq \PP^n_k$ be a closed embedding where $X$ satisfies $\cP$ and $\cP$ satisfies \ref{CGM-B0}, \ref{CGM-B1} and \ref{CGM-B2}. Then for a general hyperplane $H\in (\PP^n_k)^\vee$, $X\cap H$ satisfies $\cP$. If $\cP$ additionally satisfies \ref{CGM-3}, then we have the containment of $\cP$-loci $\cP(X\cap H)\supseteq \cP(X)\cap H$ for a general hyperplane $H\in (\PP^n_k)^\vee$.
    \end{enumerate}
\end{theorem}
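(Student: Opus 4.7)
The plan is to adapt the strategy of \cite[Theorem 1, Corollary 1, Corollary 2]{CGM86} to the modified axiomatic framework, in which \ref{CGM-B0} and \ref{CGM-B1} jointly replace \ref{CGM-1}. I would prove part \ref{thm:CGM-1} first and deduce \ref{thm:CGM-2} and \ref{thm:CGM-3} as corollaries. For \ref{thm:CGM-1}, form the universal incidence variety $Z \subseteq \PP^n_k \times_k (\PP^n_k)^\vee$ and pull back along $\phi$ to obtain $W := X \times_{\PP^n_k} Z$, with projections $\pi\colon W \to X$ and $p\colon W \to (\PP^n_k)^\vee$. Since $\pi$ is a Zariski-locally trivial $\PP^{n-1}$-bundle, $W$ admits an open cover by schemes of the form $U \times_k \mathbb{A}^{n-1}_k$ with $U \subseteq X$ an affine open; iteratively applying \ref{CGM-B0} shows each patch satisfies $\cP$, and locality of $\cP$ promotes this to $W$. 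Localization then yields that the generic fiber $W_\eta := p^{-1}(\eta)$ satisfies $\cP$ as a scheme over $K := \kappa(\eta)$.

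The crucial step is upgrading $W_\eta$ to geometric $\cP$ over $K$. For a finite extension $K \subseteq L$, the fibers of $W_\eta \times_K L \to W_\eta$ over $w \in W_\eta$ are $\Spec(\kappa(w) \otimes_K L)$; I would establish their regularity by showing that $\kappa(w)/K$ is separable in the MacLane sense. To do so, decompose $p$ as $W \to Z \to (\PP^n_k)^\vee$: the second map is a smooth $\PP^{n-1}$-bundle, so $\kappa(z)/K$ is separably generated for $z$ the image of $w$ in $Z_\eta$, while the first map is the base change of $\phi$ along $Z \to \PP^n_k$, so the separably generated residue field hypothesis on $\phi$ yields that $\kappa(w)/\kappa(z)$ is likewise separably generated. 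Transitivity forces the separability of $\kappa(w)/K$, whence $\kappa(w) \otimes_K L$ is a reduced finite $\kappa(w)$-algebra---a product of fields, and thus regular. Axiom \ref{CGM-B1} then yields that $W_\eta \times_K L$ satisfies $\cP$, so $W_\eta$ is geometrically $\cP$ over $K$. Since $(\PP^n_k)^\vee$ is integral and of finite type over $\overline{k}$, axiom \ref{CGM-B2} produces an open $U \ni \eta$ such that $W_s = \phi^{-1}(H_s)$ satisfies $\cP$ for every closed point $s \in U$, completing \ref{thm:CGM-1}.

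Part \ref{thm:CGM-3} follows by applying \ref{thm:CGM-1} to the closed embedding $X \hookrightarrow \PP^n_k$, whose residue field extensions are identities and hence trivially separably generated. Under the further hypothesis \ref{CGM-3}, the locus $\cP(X) \subseteq X$ is open; reapplying \ref{thm:CGM-1} to the open embedding $\cP(X) \hookrightarrow \PP^n_k$ and shrinking $U$ if necessary yields the containment $\cP(X) \cap H \subseteq \cP(X \cap H)$. For part \ref{thm:CGM-2}, let $\psi\colon V \dashrightarrow \PP^n_k$ be the rational map associated with the linear system $S$, defined on $V \setminus \Bs(S)$; by \ref{CGM-3}, the locus $\cP(V)$ is open, so apply \ref{thm:CGM-1} to the restriction of $\psi$ to $(V \setminus \Bs(S)) \cap \cP(V)$ (which retains separably generated residue field extensions by assumption), and translate hyperplanes in $\PP^n_k$ back to elements of $S$.

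The main obstacle is the residue field analysis in the critical step of \ref{thm:CGM-1}: confirming that $\kappa(w)/K$ is MacLane-separable for every $w \in W_\eta$, which is exactly what allows \ref{CGM-B1} to stand in for the stronger \ref{CGM-1}. This is the decisive place where the separably generated hypothesis on $\phi$ intervenes; without it, purely inseparable finite extensions of $K$ could produce non-regular fibers in the base change, blocking the spread-out provided by \ref{CGM-B2}.
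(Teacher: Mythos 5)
Your proposal is correct and follows essentially the same strategy as the paper: cover $X\times_{\PP^n_k} Z$ by patches of the form $U\times_k\mathbb{A}^{n-1}_k$, apply \ref{CGM-B0} and localize to get $\cP$ on the generic fiber $W_\eta$, upgrade to geometric $\cP$ via \ref{CGM-B1}, and spread out via \ref{CGM-B2}, with parts \ref{thm:CGM-2} and \ref{thm:CGM-3} then following formally as in \cite{CGM86}. Your direct verification that $W_\eta\times_K L\to W_\eta$ has regular fibers---decomposing $W\to Z\to(\PP^n_k)^\vee$ and invoking transitivity of MacLane separability for finitely generated extensions so that $\kappa(w)\otimes_K L$ is a finite product of fields---is the precise content needed for \ref{CGM-B1}, and is somewhat more explicit than the paper's corresponding step, which cites \cite{CGM86} for regularity of fibers of a related map out of $R$.
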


\begin{proof}
   We start by proving \ref{thm:CGM-1}. Let $\PP = \PP^n_k$ and $\PP^{\vee} = (\PP^n_k)^{\vee}$. We abuse notation as follows: if $H\subseteq \PP$ is a hyperplane, we still denote $H\in \PP^{\vee}$ the corresponding closed point of the dual space. Choose coordinates $X_0,X_1,\ldots,X_n$ and $Y_0,Y_1,\ldots, Y_n$ of the coordinate rings of $\PP$ and $\PP^{\vee}$ respectively. Let $Z$ be the Zariski closure of $\{(x,H)\in \PP\times_k \PP^{\vee}\mid x\in H\}$, defined by the equation $X_0Y_0+\cdots + X_nY_n = 0$. Consider the commutative diagram:
    \[
    \begin{xymatrix}
    {
    X\times_\PP Z \ar[rr]^{\sigma}\ar[dd]\ar[dr]^{\rho} & & Z\ar[d]\ar[dl]^{\pi}\\
    \, & \PP^{\vee} & \PP\times_k \PP^{\vee}\ar[l]\ar[d]\\
    X\ar[rr]^{\phi} & \, & \PP
    }
    \end{xymatrix}
    \]
and note that $\phi^{-1}(H)\cong \sigma^{-1}(\pi^{-1}(H))$. We first show that $X \times_\PP Z_\eta$ is geometrically $\cP$ over $\kappa(\eta)$, that is, that $(X \times_\PP Z_\eta) \times_{\kappa(\eta)}L$ is $\cP$ for all finite field extensions $\kappa(\eta) \subseteq L$.

    The collection of affine charts $\{(D(X_a)\cap Z)\times (D(Y_b)\cap Z)\mid a\not=b\}$ covers the projective variety $Z$. The affine charts $\{D(X_a)\}$ cover $\PP$, and therefore $\{\phi^{-1}(D(X_a))\}$ is an open cover of $X$. By symmetry, we may assume $a=0$ and $b=n$. Let $x_i = \frac{X_i}{X_0}$ and $y_j= \frac{Y_j}{Y_n}$, and note that 
    \[
    Z_{0,n}:=(D(X_0)\cap Z)\times (D(Y_n)\cap Z) = \Spec\left( \frac{k[x_1,x_2,\ldots,x_n,y_1,y_2,\ldots,y_{n-1}]}{(y_0 + \sum_{i=1}^{n-1}x_iy_i + x_{n})}\right).
    \]
    The preimage $\phi^{-1}(D(X_0))$ in $X$ is covered by affine open sets of finite type over $k$, say $\{V_1,V_2,\ldots,V_m\}$. It suffices to show each $V_i\times_{\Spec(k[x_1,\ldots,x_n])} (Z_{0,n})_{\eta}$ is geometrically $\cP$  over $\kappa(\eta)$. So let $V=\Spec(R)$ be an affine open subset of $\phi^{-1}(D(X_0))$, with $R$ an algebra of finite type over $k$. Let $\phi^*:k[x_1,\ldots,x_n] \to R$ be the induced map of $k$-algebras, and for $j=1,\ldots,n$ let $f_j=\phi^*(x_j)$. Note that
    \[
    V \times_{\PP} Z_{0,n} = \Spec\left(\frac{R[y_0,\ldots,y_{n-1}]}{(y_0+\sum_{i=1}^{n-1}f_iy_i + f_n)}\right) = \Spec(R[y_1,\ldots,y_{n-1}]),
    \]
    which is $\cP$ thanks to \ref{CGM-B0}. Since $\cP$ also localizes, we conclude that $V \times_{\PP} (Z_{0,n})_\eta$ is $\cP$ as well. Note that, if we set $W=k[y_0,\ldots,y_{n-1}]\smallsetminus \{0\}$, then
    \[
    V \times_\PP (Z_{0,n})_\eta = \Spec\left(\left(\frac{R[y_0,\ldots,y_{n-1}]}{y_0+\sum_{i=1}^{n-1} f_iy_i + f_n)}\right)_W\right)
    \]
    is a scheme of finite type over $\kappa(\eta) = k[y_0,\ldots,y_{n-1}]_W$. Since the residue field extensions of $\phi$ are separably closed by assumption, by \cite{CGM86} we have that the fibers of the map 
    \[
    R \to \left(\frac{R[y_0,\ldots,y_{n-1}]}{(y_0+\sum_{i=1}^{n-1} f_iy_i + f_n)}\right)_W \otimes_{\kappa(\eta)} L
    \]
    are regular for all finite extensions $\kappa(\eta) \subseteq L$. By \ref{CGM-B1} we have that $\left(\frac{R[y_0,\ldots,y_{n-1}]}{y_0+\sum_{i=1}^{n-1} f_iy_i + f_n)}\right)_W \otimes_{\kappa(\eta)} L$ is $\cP$, and thus we conclude that $X \times_\PP Z_\eta$ is geometrically $\cP$ over $\kappa(\eta)$. 
    Now, since $\cP$ satisfies \ref{CGM-B2} there exists an open subset $U$ of closed points of $\PP^\vee$ such that $Y_s$ is $\cP$ for all $s \in U$. Each closed point of $U$ corresponds to a hyperplane, and we thus conclude that $\phi^{-1}(H)$ is $\cP$ for a general hyperplane $H \subseteq \PP$. 

    Given part \ref{thm:CGM-1}, the proofs of \ref{thm:CGM-2} and \ref{thm:CGM-3} are now completely analogous to the corresponding ones in \cite{CGM86}.
\end{proof}

\section{\texorpdfstring{$F$}{F}-rational and \texorpdfstring{$F$}{F}-pure singularities} \label{sec:FratFpure}

We consider the property $\cP =$ ``$F$-rational and $F$-pure'' for noetherian $F$-finite schemes. Observe that $\cP$ satisfies axioms \ref{CGM-B0} (e.g., by \cref{thm:F-sing-base-change}) and \ref{CGM-3} \cite[Theorem 3.2]{Has10}, \cite[Theorem 1.1]{Vel95} (see also \cite[Section 6]{MP25}).

The following lemma shows the property ``$F$-rational and $F$-pure'' satisfies \ref{CGM-B1}.

\begin{lemma} \label{B1:F-rational-F-pure} Let $K \subseteq L$ be a finite extension of fields of characteristic $p>0$, and let $R$ be a $K$-algebra of finite type. Assume that the map $R \to S:=R \otimes_K L$ has regular fibers, and that $R$ is $F$-rational and $F$-pure. Then $S$ is $F$-rational and $F$-pure.
\end{lemma}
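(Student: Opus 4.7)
The plan is to verify $F$-purity, Cohen--Macaulayness, and $F$-rationality of $S$ separately, with $F$-rationality being the main challenge where the $F$-purity assumption on $R$ is essential. For the first two, given $\mathfrak{q}\in\Spec S$ with preimage $\mathfrak{p}=\mathfrak{q}\cap R$, the induced local map $R_\mathfrak{p}\to S_\mathfrak{q}$ is flat, and its closed fiber is a localization of the reduced finite $\kappa(\mathfrak{p})$-algebra $\kappa(\mathfrak{p})\otimes_K L$, hence regular (in fact a field). Then \cref{thm:F-pure-base-change} applies locally to yield $F$-purity of $S$, while flatness with CM fibers gives that $S$ is CM.

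For $F$-rationality I would use the criterion from \cref{prop: global Frobenius trace}: since $R$ is CM and $F$-rational, there exist $e_0\in\NN$ and a parameter test element $c\in R^\circ$ so that $\Tr^{e_0}_R(F^{e_0}_*c\cdot-)\colon F^{e_0}_*\omega_R\twoheadrightarrow\omega_R$ is surjective. Since $R\to S$ is finite flat, one has an $S$-linear identification $\omega_S\cong\omega_R\otimes_R S$ (using $\omega_S\cong\Hom_R(S,\omega_R)\cong L^\vee\otimes_K\omega_R$ together with the $L$-module isomorphism $L^\vee\cong L$ between $1$-dimensional $L$-vector spaces). Tensoring the surjection above with $S$ then yields a surjective $S$-linear map $\Phi\colon F^{e_0}_*\omega_R\otimes_R S\twoheadrightarrow\omega_S$, and the remaining goal is to realize $\Phi$ as factoring through a map of the form $\Tr^{e}_S(F^{e}_*c\cdot-)\colon F^{e}_*\omega_S\twoheadrightarrow\omega_S$ for some $e\geq e_0$, thereby giving the required surjectivity for $S$ via \cref{prop: global Frobenius trace}.

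The main obstacle is that the natural $S$-linear map $\alpha\colon F^{e}_*\omega_R\otimes_R S\to F^{e}_*\omega_S$ defined by $F^{e}_*m\otimes s\mapsto F^{e}_*(m\otimes s^{p^{e}})$ is not surjective in general: its image is cut out by $S^{p^{e}}\subseteq S$, which is a proper subring whenever $L/K$ has a nontrivial purely inseparable part. My plan to circumvent this is to split $L/K$ through its separable closure $K\subseteq K_s\subseteq L$. The extension $K_s/K$ is finite separable, so $R\to R\otimes_K K_s$ is \'etale, and both properties transfer directly by \cref{thm:F-rat-base-change} and \cref{thm:F-pure-base-change}. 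It then suffices to treat the residual purely inseparable extension, where $L^{p^r}\subseteq K_s$ for some $r$ and a direct computation on simple tensors gives $S^{p^r}\subseteq R\otimes_K K_s$ inside $S$. Consequently, for $e\geq r$ the Frobenius $F^{e}_S$ factors through $R\otimes_K K_s$, and the $F$-purity of $R\otimes_K K_s$ (inherited from $R$) then provides compatible splittings of iterated Frobenius that identify $\Tr^{e}_S$ with the base change of $\Tr^{e}_{R\otimes_K K_s}$ under this factorization, promoting $\Phi$ to the desired surjectivity of $\Tr^{e}_S(F^{e}_*c\cdot-)$. The $F$-purity hypothesis is essential here: without it, the examples of \cite{QGSS24} show that $F$-rationality can be destroyed by purely inseparable base change.
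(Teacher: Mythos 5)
Your overall plan---handle $F$-purity and Cohen--Macaulayness first, then split $K\subseteq L$ through the separable closure $K_s$, dispatching the separable part by smooth/\'etale base change and attacking the purely inseparable part via the relation $S^{p^r}\subseteq R\otimes_K K_s$---is exactly the skeleton of the paper's proof, and your treatment of the separable case is correct. The issue is the final step for the purely inseparable part, which is where the real content of the lemma lives, and your sentence there is not a proof.

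Concretely, you write that ``the $F$-purity of $R\otimes_K K_s$\ldots provides compatible splittings of iterated Frobenius that identify $\Tr^e_S$ with the base change of $\Tr^e_{R\otimes_K K_s}$.'' Taken at face value this is false for precisely the reason you flagged two sentences earlier: the base change $\Tr^e_{R'}\otimes_{R'} S$ has source $F^e_*\omega_{R'}\otimes_{R'} S$, while $\Tr^e_S$ has source $F^e_*\omega_S$, and the natural map between them is not an isomorphism. What is true---and what you would need to actually write out---is that for $e\ge r$ the Frobenius $F^e_S\colon S\to F^e_*S$ factors as $S\to F^r_*R'\to F^e_*R'\hookrightarrow F^e_*S$, and dualizing this gives a factorization
\[
\Tr^e_S(F^e_*c\,\cdot\,)\colon F^e_*\omega_S\longrightarrow F^e_*\omega_{R'}\xrightarrow{\,F^r_*(\Tr^{e_0}_{R'}(F^{e_0}_*c\,\cdot\,))\,}F^r_*\omega_{R'}\longrightarrow\omega_S
\]
(with $e=e_0+r$). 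Surjectivity of the middle arrow is $F$-rationality of $R'$. Surjectivity of the first arrow requires the trace $\omega_S\to\omega_{R'}$ to be onto, which holds because $R'\hookrightarrow S$ is $R'$-split---and this uses $F$-purity of $R'$ together with $R'\subseteq S\subseteq F^r_*R'$. Surjectivity of the last arrow requires $S\hookrightarrow F^r_*R'$ to be split, which does \emph{not} follow from $F$-purity of $R'$ (the splitting $F^r_*R'\to R'$ restricted to $S$ does not fix $S$ pointwise); it follows from $F$-purity of $S$, which you established but do not invoke here. So the argument can be rescued, but ``identify under compatible splittings'' glosses over three separate surjectivity checks using two different $F$-purity hypotheses.

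For comparison, the paper avoids trace maps entirely in the purely inseparable step and argues on the ideal side: with $A=R'_\fm\to B=S_\fn$ and a common parameter ideal $\fq$, if $y\in(\fq B)^*$ then $c^{p^{e_0}}y^{p^{e+e_0}}\in(\fq^{[p^{e_0}]})^{[p^e]}B\cap A=(\fq^{[p^{e_0}]})^{[p^e]}A$ by flatness (using $y^{p^{e_0}},c^{p^{e_0}}\in A$), hence $y^{p^{e_0}}\in(\fq^{[p^{e_0}]}A)^*=\fq^{[p^{e_0}]}A\subseteq\fq^{[p^{e_0}]}B$ by $F$-rationality of $A$, and finally $y\in\fq B$ by $F$-purity of $B$. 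This is the Matlis-dual of the factorization above, but it is a three-line computation that sidesteps the care required to identify trace maps on the non-local dual side. You should either carry out the dual argument with the factorization made explicit as above, or switch to the ideal-theoretic computation.
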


\begin{proof}
We have that $S$ is $F$-pure by \cref{thm:F-sing-base-change} \ref{thm:F-pure-base-change}. We now show that $S$ is also $F$-rational.

    Note that the extension $R \subseteq S$ is finite and free, and both $S$ and $R$ are Cohen--Macaulay domains. Let $\fn$ be a maximal ideal of $S$, so that $\fm:=\fn \cap R$ is a maximal ideal of $R$. Note that $\fn$ is a minimal prime of $\fm S$, and set $(A,\fm):=R_\fm \to S_\fn=:(B,\fn)$. Let $d$ be the common Krull dimension of the local rings $A$ and $B$.
    
    In order to show $F$-rationality of $B$, first assume that the extension $K \subseteq L$ is finite separable. We claim that, in this case, the map $A \to B$ is faithfully flat with geometrically regular closed fiber. Since both rings are excellent, this will imply that $B$ is $F$-rational, as desired, see \cref{thm:F-sing-base-change}. It is clear that the map is faithfully flat.
    Let $\ell$ be a finite field extension of $\kappa(\fm):=R/\fm$. Since $K \subseteq \kappa(\fm)$ is also finite, we have that $\ell$ is a finite extension of $K$. Note that
    \[
    S/\fm S \otimes_{\kappa(\fm)} \ell \cong (L \otimes_K \kappa(\fm)) \otimes_{\kappa(\fm)} \ell \cong L \otimes_K \ell.
    \]
    Since $K \subseteq L$ is finite separable, the ring $L \otimes_K \ell$ is a product of fields. This shows that the map $A \to S_W$, with $W=R \smallsetminus \fm$, has geometrically regular fibers, and therefore so does $A \to B$.

    Now assume instead that the finite field extension $K \subseteq L$ is purely inseparable. In particular, there exists $e_{0}\in\NN$ such that $L^{p^{e_0}} \subseteq K$. Then $S^{p^{e_0}} \subseteq R$, and 
    we also get that $B^{p^{e_0}} \subseteq A$.
    
    Let $\fq = (x_1,\dots, x_d)\subseteq A$ be a common system of parameters for $A$ and $B$. The $F$-rationality of $B$ will follow from the equality $\fq B =(\fq B)^*$. To that end, let $y\in (\fq B)^*$. There exists $c\in B^{\circ}$ so that
    \begin{align*}
        cy^{p^e} \in (x_1^{p^e},\dots, x_d^{p^e})B
    \end{align*}
    for all $e\gg 0$. Since $c^{p^{e_0}}$ and $y^{p^{e_0}}\in A$, we have by flatness of $A\to B$,
    \begin{align*}
        c^{p^{e_0}} y^{p^{e+e_0}}\in (x_1^{p^{e+e_0}},\dots, x_d^{p^{e+e_0}})B \cap A = (x_1^{p^{e_0}},\ldots,x_d^{p^{e_0}})^{[p^e]}A
    \end{align*}
    for all $e\gg 0$. It follows that
    \begin{align*}
        y^{p^{e_0}}\in \left((x_1^{p^{e_0}},\dots, x_d^{p^{e_0}})A\right)^* = (x_1^{p^{e_0}},\dots, x_d^{p^{e_0}})A\subseteq (x_1^{p^{e_0}},\dots, x_d^{p^{e_0}})B
    \end{align*}
    by flatness and the assumption that $A$ is $F$-rational. Since $B$ is $F$-pure, it follows that $y\in \fq B$, and thus $B$ is $F$-rational as claimed. 
    
    To handle the general case, write the finite extension $K \subseteq L$ as $K \subseteq L_1 \subseteq L$, where the $K \subseteq L_1$ is separable and $L_1 \subseteq L$ is purely inseparable. Since $S = R \otimes_K L \cong (R \otimes_K L_1) \otimes_{L_1} L$ we conclude that $S$ is $F$-rational by combining the two cases discussed above.
\end{proof}

We recall the fact that $F$-purity satisfies axiom \ref{CGM-2}, hence axiom \ref{CGM-B2} as well.

\begin{theorem}[{\cite[Corollary 5.2]{SZ13}}]\label{thm:a-2-F-pure}
Let $Y\to S$ be a finite type morphism of $F$-finite schemes where $S$ is integral with generic point $\eta\in S$. If $Y_\eta$ is geometrically $F$-pure, then there exists an open neighborhood $\eta\in U\subseteq S$ such that $Y_s$ is (geometrically) $F$-pure for every $s\in U$.
\end{theorem}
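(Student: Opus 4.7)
The plan is to reduce to an affine spreading-out argument using the characterization of geometric $F$-purity in terms of splitting of the \emph{relative} Frobenius map of modules, rather than the absolute Frobenius.

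First I would reduce to the affine case. Since the conclusion is local on both $Y$ and $S$, I may assume $Y = \Spec(T)$ and $S = \Spec(R)$, with $R \to T$ a finite-type morphism of $F$-finite rings, $R$ an integral domain with fraction field $K = \kappa(\eta)$. Set $A = T \otimes_R K$. Because $R$ and $T$ are $F$-finite, $R^{1/p}$ and $T^{1/p}$ are finitely generated as modules over $R$ and $T$ respectively; in particular, $T \otimes_R R^{1/p}$ and $T^{1/p}$ arise by base change from finitely presented $R$-modules.

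Next I would invoke the following characterization, which is the technical core of \cite{SZ13}: for an $F$-finite $K$-algebra $A$, geometric $F$-purity over $K$ is equivalent to the splitting of the $A$-linear \emph{relative} Frobenius
\[
\mu_{A/K} \colon A \otimes_K K^{1/p} \longrightarrow A^{1/p}, \qquad a \otimes \alpha \longmapsto a\alpha,
\]
as a map of $A$-modules (equivalently, of $A \otimes_K K^{1/p}$-modules). The virtue of this reformulation is that a splitting of $\mu_{A/K}$ is preserved by arbitrary base change $K \subseteq L$, because tensoring over $K$ with $L$ commutes with every ring in sight. This is precisely what distinguishes the geometric notion from absolute $F$-purity (equivalently, $F$-injectivity in the Cohen--Macaulay case), and explains why the analogous axiom fails for $F$-injectivity, cf. \cite{Ene09}.

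Given this characterization, the geometric $F$-purity of the generic fiber $A = T \otimes_R K$ supplies an $A$-linear splitting $\phi_\eta$ of $\mu_{A/K}$. Since the relevant source and target are finitely presented $R_f$-modules for any $f\neq 0$ and equal to $\mu_{T_f/R_f}$ after localization, a standard spreading-out argument produces some $0 \neq f \in R$ and an $R_f$-linear splitting $\phi$ of $\mu_{T_f / R_f}$ which agrees with $\phi_\eta$ generically. Taking $U = D(f) \subseteq S$, for every point $s \in U$ the reduction of $\phi$ along $R_f \to \kappa(s)$ is a splitting of $\mu_{(T \otimes_R \kappa(s))/\kappa(s)}$, so $Y_s$ is geometrically $F$-pure over $\kappa(s)$, as desired.

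The main obstacle is establishing cleanly the equivalence between geometric $F$-purity and splitting of the \emph{relative} Frobenius; this is the step that distinguishes the argument from what one would attempt naively with the absolute Frobenius and that ultimately requires the $F$-finite hypothesis to ensure the relevant modules are finitely presented. Once that characterization is in hand, the remainder is standard Noetherian bookkeeping: a splitting over the generic point of a finitely presented module map spreads to a distinguished open, and splittings descend under reduction modulo any point of that open.
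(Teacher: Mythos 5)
The paper provides no proof of this statement---it is cited as \cite[Corollary 5.2]{SZ13}---so there is no in-paper argument to compare against. Your reconstruction follows the same strategy as Schwede--Zhang: reformulate geometric $F$-purity of $A/K$ as the splitting of the relative Frobenius $\mu_{A/K}\colon A\otimes_K K^{1/p}\to A^{1/p}$ (as a map of $A\otimes_K K^{1/p}$-modules), and then spread out a generic splitting using that $T^{1/p}$ is finitely presented over the Noetherian ring $T\otimes_R R^{1/p}$. One imprecision should be repaired: the phrase ``tensoring over $K$ with $L$ commutes with every ring in sight'' is misleading, because $A^{1/p}\otimes_K L$ is \emph{not} $(A\otimes_K L)^{1/p}$ in general---already $K^{1/p}\otimes_K L\ne L^{1/p}$ when $L/K$ is inseparable. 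The correct operation is to tensor the splitting, an element of $\Hom_{A\otimes_K K^{1/p}}\bigl(A^{1/p},\,A\otimes_K K^{1/p}\bigr)$, over $K^{1/p}$ with $L^{1/p}$, using the canonical identification $(A\otimes_K L)^{1/p}\cong A^{1/p}\otimes_{K^{1/p}}L^{1/p}$ and the freeness of $L^{1/p}$ over $K^{1/p}$; likewise, in the spreading-out step one base changes along $R^{1/p}\to\kappa(s)^{1/p}$ rather than along $R\to\kappa(s)$. With this adjustment the argument is sound and gives geometric $F$-purity of every fiber over $U$, closed or not. (As a minor aside, the parenthetical equating absolute $F$-purity with $F$-injectivity in the Cohen--Macaulay case is incorrect; that equivalence needs quasi-Gorensteinness.)
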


\subsection{\texorpdfstring{\ref{CGM-B2}}{(B2)} for \texorpdfstring{$\cP$ = ``$F$-rational"}{P = "F-rational"}}

The goal of this subsection is to provide a self-contained and novel treatment showing that $F$-rational satisfies \ref{CGM-B2} up to an integrality assumption on the generic fiber; see \cref{cor:A2-F-rational}. As already mentioned, this also follows from results contained in \cite{PSZ18}.

In this subsection we will repeatedly be in the following setting. While not every result will assume this setting, we will always be able to reduce to it via inverting an element of the source ring $A$.
\begin{setting}\label{setting-B2}
    Let $\varphi\colon A \to R$ be a finite type flat homomorphism of integral domains, where $A$ is regular and $R$ is Cohen--Macaulay. We let $K=\Frac(A)$ be the fraction field of $A$ and, for $\fp \in \Spec(A)$, we let $\kappa(\fp) = (A/\fp)_\fp$. If $B$ is an $A$-algebra, we denote $R_B:=R\otimes_A B$. If $x\in\Spec(R_B)$, we denote by $R_{B,x}$ the localization of $R_B$ at $x$. Let $T=A[X_1,\ldots,X_n]$ be a polynomial algebra mapping onto $R$, and set $\fP := \ker(T \twoheadrightarrow R)$.
\end{setting}

\begin{remark}
    In what follows, we will refer to Frobenius pushforwards via both the restriction of scalars functor and via rings of $p$\ts{th}-power roots depending in part on if we wish to emphasize the module structure. This occasionally leads to expressions involving both operations $(-)^{p^e}$ and $F^e_*(-)$ simultaneously.
\end{remark}

Let $\varphi\colon A\to R$ be as in \cref{setting-B2}. Assume that $A$ is finitely generated over a perfect field $k$ and $R$ is geometrically $F$-rational over the generic point of $\Spec(A)$. As in the approaches taken in \cite{PSZ18} to study $F$-singularities in families, we utilize the theory of relative canonical modules to establish $R_{L}$ is $F$-rational for all finite extensions of $\kappa(\fm)\subseteq L$ and $\fm$ belonging to a non-empty set of maximal ideals $U\subseteq \maxSpec(A)$. By \cref{prop: global Frobenius trace}, it suffices to identify a non-empty set of maximal ideals $U\subseteq \maxSpec(A)$, an element $c\in R$ that serves as a parameter test element of $R_{L}$ for all finite extensions $L$ of $\kappa(\fm)$ and $\fm\in U$, and an identification of a constant $e_0$ so that the Frobenius dual $T^{e_0}_{R_{L}}(F^{e_0}_*c-)\colon F^{e_0}_*\omega_{R_{L}}\to \omega_{R_{L}}$ is surjective.

If $R\cong A[x_1,x_2,\ldots,x_n]/\fP$, $h=\Height(\fP)$, then $\omega_{R|A} := \Ext^h_T(R,T)$ is \emph{a relative canonical module for $R$ over $A$}. For all $e \geq 1$ note that $T_{A^{1/p^e}}=A^{1/p^e}[X_1,\ldots,X_n]$ and that, in this case, $\omega_{R^{1/p^e}|A^{1/p^e}} = \Ext^h_{T_{A^{1/p^e}}}(R^{1/p^e},T_{A^{1/p^e}})$. The first lemma concerns the base change of relative canonical modules to fibers (see \cite[\href{https://stacks.math.columbia.edu/tag/0E9M}{Tag 0E9M}]{stacks-project}).

\begin{lemma} \label{lemma base change} Let $A \to R$ be as in \cref{setting-B2}. For all $\fp \in \Spec(A)$ we have
\[
\omega_{R^{1/p^e}|A^{1/p^e}} \otimes_{F^e_*A} \kappa(\fp)^{1/p^e} \cong \Ext^h_{T_{\kappa(\fp)^{1/p^e}}}(F^e_*(R_{\kappa(\fp)}),T_{\kappa(\fp)^{1/p^e}}) = \omega_{F^e_*(R_{\kappa(\fp)})|\kappa(\fp)^{1/p^e}},
\]
where $T_{\kappa(\fp)^{1/p^e}} \cong \kappa(\fp)^{1/p^e}[X_1,\ldots,X_n]$. Moreover, $\omega_{F^e_*(R_{\kappa(\fp)})|\kappa(\fp)^{1/p^e}}$ is a canonical module for $F^e_*(R_{\kappa(\fp)})$.
\end{lemma}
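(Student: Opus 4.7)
Setting $T' := T_{A^{1/p^e}}$, the proof hinges on two points: a flat base-change formula for Ext, and the identification of $R^{1/p^e}\otimes_{A^{1/p^e}}\kappa(\fp)^{1/p^e}$ with $F^e_*(R_{\kappa(\fp)})$ as $T_{\kappa(\fp)^{1/p^e}}$-modules. For the latter, I would observe that the Frobenius pushforward functor commutes with tensor products, so $F^e_*(R\otimes_A \kappa(\fp)) \cong F^e_* R \otimes_{F^e_* A} F^e_* \kappa(\fp)$, and then use the canonical algebra identifications $F^e_* R \cong R^{1/p^e}$, $F^e_* A \cong A^{1/p^e}$, and $F^e_* \kappa(\fp) \cong \kappa(\fp)^{1/p^e}$. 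The induced $T_{\kappa(\fp)^{1/p^e}}$-module structures agree on both sides because the action of each $X_i$ corresponds to the image of $X_i \in T$ under $T \to R \hookrightarrow F^e_* R$. In the same step, I would record that $R^{1/p^e}$ is flat over $A^{1/p^e}$, inherited from the flatness of $R$ over $A$ through the compatible Frobenius ring isomorphisms.

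With these in hand, I would choose a finite-length free resolution $F_\bullet \to R^{1/p^e}$ over $T'$, which exists because $T' = A^{1/p^e}[X_1,\ldots,X_n]$ is a regular Noetherian ring (as $A^{1/p^e}$ is regular by Kunz). Since $R^{1/p^e}$ is Cohen--Macaulay as a ring and the kernel of $T' \twoheadrightarrow R^{1/p^e}$ has height $h$, the module $R^{1/p^e}$ is $T'$-perfect of grade $h$; in particular $\Ext^i_{T'}(R^{1/p^e}, T') = 0$ for $i \neq h$. Flatness of $R^{1/p^e}$ over $A^{1/p^e}$ ensures that tensoring $F_\bullet$ with $\kappa(\fp)^{1/p^e}$ over $A^{1/p^e}$ remains exact and yields a free resolution of $F^e_*(R_{\kappa(\fp)})$ over $T_{\kappa(\fp)^{1/p^e}} = T' \otimes_{A^{1/p^e}} \kappa(\fp)^{1/p^e}$.

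Applying $\Hom_{T'}(-, T')$ to $F_\bullet$ and then tensoring with $\kappa(\fp)^{1/p^e}$ over $A^{1/p^e}$ is naturally isomorphic, degree by degree, to applying $\Hom_{T_{\kappa(\fp)^{1/p^e}}}(-, T_{\kappa(\fp)^{1/p^e}})$ to the base-changed resolution, since $\Hom$ of finite free modules commutes with base change. Taking $h$-th cohomology on both sides gives the desired isomorphism; here the vanishing of $\Ext^i$ in degrees $i \neq h$ is what allows base change to commute with taking cohomology. The moreover clause follows from the standard fact that, when $C$ is a regular Noetherian ring and $M$ is a finite Cohen--Macaulay $C$-module of grade $h$, the module $\Ext^h_C(M, C)$ is a canonical module for $M$; this applies with $C = T_{\kappa(\fp)^{1/p^e}}$ and $M = F^e_*(R_{\kappa(\fp)})$, which is Cohen--Macaulay of grade $h$ because $R_{\kappa(\fp)}$ is (as a flat fiber of a Cohen--Macaulay morphism) and $F^e_*$ preserves this. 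The main bookkeeping obstacle is consistently distinguishing $R^{1/p^e}$ from the naive base change $R_{A^{1/p^e}} = R \otimes_A A^{1/p^e}$, which are in general non-isomorphic in positive characteristic when $A$ is not perfect.
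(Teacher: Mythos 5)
Your argument is sound in spirit, but it takes a different route than the paper: the paper's proof is two lines, noting that $F^e_*A\to F^e_*R$ is still flat and of finite type and then citing \cite[Tags \href{https://stacks.math.columbia.edu/tag/0BZV}{0BZV} and \href{https://stacks.math.columbia.edu/tag/0BZW}{0BZW}]{stacks-project}, whereas you reconstruct the base-change formula by hand via a finite free resolution. Your self-contained version is essentially what underlies those Stacks tags and is a perfectly acceptable alternative.

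There is one genuine inaccuracy you should fix. You assert that ``the kernel of $T' \twoheadrightarrow R^{1/p^e}$ has height $h$,'' but there is no surjection from $T' = A^{1/p^e}[X_1,\ldots,X_n]$ onto $R^{1/p^e}$. What $T'$ surjects onto is the naive base change $R_{A^{1/p^e}} = R\otimes_A A^{1/p^e}$, with kernel the extension of $\fP$ (still of height $h$); the ring $R^{1/p^e}$ is then a finite $R_{A^{1/p^e}}$-module via the relative Frobenius, hence a finite $T'$-module, but it is not a cyclic $T'$-module unless $A$ is perfect. What rescues the argument is that $R^{1/p^e}$ is a finite Cohen--Macaulay $T'$-module of codimension $h$, and $T'$ is regular (Kunz), so Auslander--Buchsbaum applied locally gives $\pd_{T'} R^{1/p^e} = h = \grade_{T'} R^{1/p^e}$, i.e.\ $T'$-perfection of grade $h$. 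Alternatively, take a free resolution of $R^{1/p^e}$ over the polynomial ring $T^{1/p^e}=A^{1/p^e}[X_1^{1/p^e},\ldots,X_n^{1/p^e}]$, which does surject onto $R^{1/p^e}$, and restrict scalars along the finite free map $T'\hookrightarrow T^{1/p^e}$. A minor further remark: the vanishing of $\Ext^i$ for $i\neq h$ is not actually needed to commute $\Ext^h$ with the base change, because $\Hom_{T'}(F_\bullet,T')$ terminates in cohomological degree $h$, so $\Ext^h$ is a cokernel and cokernels commute with any tensor product; the Ext-vanishing is still relevant for identifying the result as a canonical module and for the flatness assertion you use earlier, so it is not wasted.
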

\begin{proof}
    Note that $F^e_*A\to F^e_*R$ is still flat and of finite type. The lemma then follows from \cite[Tags \href{https://stacks.math.columbia.edu/tag/0BZV}{0BZV} and \href{https://stacks.math.columbia.edu/tag/0BZW}{0BZW}]{stacks-project}.
\end{proof}

Continuing as in \cref{setting-B2}, for all $e > 0$ let
\[
\Tr^e_{R|A}\colon \Ext^h_{T_{A^{1/p^e}}}(R^{1/p^e},T_{A^{1/p^e}}) \to \Ext^h_{T_{A^{1/p^e}}}(R_{A^{1/p^e}},T_{A^{1/p^e}})
\]
be the $\Ext^h_{T_{A^{1/p^e}}}(-,T_{A^{1/p^e}})$-dual of the relative Frobenius map $R_{A^{1/p^e}} \to R^{1/p^e}$. By \cref{lemma base change}, for any $\fp \in \Spec(A)$ such that $\kappa(\fp)$ is perfect we have that $\Tr^e_{R|A} \otimes_{F^e_*A} \kappa(\fp)^{1/p^e}$ coincides with
\[
\Tr^e_{R_{\kappa(\fp)}|\kappa(\fp)}\colon \Ext^h_{T_{\kappa(\fp)}}(F^e_*(R_{\kappa(\fp)}),T_{\kappa(\fp)}) \to \Ext^h_{T_{\kappa(\fp)}}(R_{\kappa(\fp)},T_{\kappa(\fp)}),
\]
which identifies with the $\Ext^h_{T_{\kappa(\fp)}}(-,T_{\kappa(\fp)})$-dual of the Frobenius map $R_{\kappa(\fp)} \to F^e_*(R_{\kappa(\fp)})$. In particular at every maximal ideal $\fm$ of $T_{\kappa(\fp)}$ such a map coincides with the Matlis dual of the $e$\ts{th} iterate of the Frobenius map on local cohomology, $F^e\colon H^d_{\fm}(R_{\kappa(\fp)}) \to H^d_{\fm}(R_{\kappa(\fp)})$. If $c \in R$ is a parameter test element for $R_{\kappa(\fp)}$, and $\kappa(\fp)$ is perfect, we then see that $R_{\kappa(\fp)}$ is $F$-rational if and only if $\Tr^e_{R|A}(F^e_*c -) \otimes_{A^{1/p^e}} \kappa(\fp)^{1/p^e}$ is surjective for some $e >0$. Note that, if $cF^e\colon H^d_{\fm}(R_{\kappa(\fp)}) \to H^d_{\fm}(R_{\kappa(\fp)})$ is injective for some $e$, then in particular $F$ itself is injective, and thus $F^{e'}(cF^e) = c^{p^{e'}}F^{e+e'}$ is injective for all $e' \geq 0$. But then $cF^{e+e'}$ is injective for all $e' \geq 0$ as well.

The following lemma is inspired by \cite[Proposition 2.6]{HH00}.

\begin{lemma} \label{key lemma}
Let $A\to R$ be as in \cref{setting-B2} and write $R_K = K[X_1,\ldots, X_n]/\fQ$ for a prime $\fQ$ of height $h:=\Ht \fQ$. Let $d = n-h = \dim(R_K)$ and assume that $R_K$ is geometrically $F$-rational over $K$. Then for any $0\neq c\in R_K$, there exists $e_0>0$ such that the map 
    \[
    \Tr^e_{R_K|K}(F^e_*c-)\colon \Ext^h_{T_{K^{1/p^e}}}(F^e_*(R_K),T_{K^{1/p^e}}) \to \Ext^h_{T_{K^{1/p^e}}}(R_{K^{1/p^e}},T_{K^{1/p^e}})
    \]
    is surjective for all $e \geq e_0$.
\end{lemma}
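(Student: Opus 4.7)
My plan is to reduce the surjectivity statement to a single sufficiently large value of $e$ (and bootstrap from there to all larger $e$), and to verify surjectivity at that value using the $F$-rationality of the base changes $R_{K^{1/p^e}}$ together with an $F$-simplicity argument in the spirit of \cite[Proposition 2.6]{HH00}.

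First, I would observe that the geometric $F$-rationality of $R_K$, combined with \cref{thm:F-sing-base-change}\ref{thm:F-rat-base-change} and the fact that $K \subseteq K^{1/p^e}$ is a finite purely inseparable extension (since $K$ is $F$-finite in the intended applications), yields that $R_{K^{1/p^e}}$ is $F$-rational, Cohen--Macaulay, and $F$-finite for every $e \geq 1$. Using flat base change for canonical modules, one identifies $\omega_{R_{K^{1/p^e}}} \cong \omega_{R_K} \otimes_K K^{1/p^e}$, so that the relative trace $\Tr^e_{R_K|K}(F^e_*c\cdot -)$ is precisely the $\Ext^h_{T_{K^{1/p^e}}}(-,T_{K^{1/p^e}})$-dual of the ring map $R_{K^{1/p^e}} \to F^e_*R_K$, $r \mapsto F^e_*(cr)$.

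Second, I would establish persistence in $e$ via the functorial identity $\Tr^{e+1}_{R_K|K} = \Tr^1_{R_{K^{1/p^e}}|K^{1/p^e}} \circ F_*\Tr^e_{R_K|K}$, which follows from the corresponding composition law for the underlying relative Frobenius maps. Since each $R_{K^{1/p^e}}$ is $F$-rational (hence $F$-injective), the ``one-step'' relative trace $\Tr^1_{R_{K^{1/p^e}}|K^{1/p^e}}$ is surjective. Therefore, if $\Tr^{e_0}_{R_K|K}(F^{e_0}_*c\cdot-)$ is surjective for a single $e_0$, then applying $F_*$ and then $\Tr^1$ shows that $\Tr^e_{R_K|K}(F^e_*c\cdot-)$ is surjective for all $e \geq e_0$.

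Finally, to establish surjectivity at some $e_0$, I would work locally at each closed point $\fn \in \Spec R_{K^{1/p^{e_0}}}$ and apply local duality: the Matlis dual of the localized trace is a map on top local cohomology $H^d_{\fn}((R_{K^{1/p^{e_0}}})_\fn) \to H^d_{\fn}((F^{e_0}_*R_K)_\fn)$ given by multiplication by $c$ composed with the relative Frobenius, and its injectivity is equivalent to surjectivity of the trace at $\fn$. Using the $F$-rationality of $(R_{K^{1/p^{e_0}}})_\fn$, the nonvanishing of $c$, and a uniform test-exponent argument—promoting $c$ to behave like a test element through iterating Frobenius, using Smith's simplicity \cite{Smi97} of $\omega_{R_{K^{1/p^{e_0}}}}$ as a Cartier module, and invoking Noetherianity—one shows that for $e_0$ sufficiently large this injectivity holds at every $\fn$; quasi-compactness of $\Spec R_{K^{1/p^{e_0}}}$ then delivers a single uniform $e_0$. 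The main obstacle I expect is exactly this last step: faithfully translating $F$-simplicity of the canonical module (a property of the \emph{absolute} Cartier action of $R_{K^{1/p^{e_0}}}$) into injectivity of the Matlis dual of the \emph{relative} trace premultiplied by an arbitrary nonzero $c$, which requires carefully tracking the factorization of the absolute Frobenius of $R_{K^{1/p^{e_0}}}$ through the relative Frobenius $R_{K^{1/p^{e_0}}} \hookrightarrow F^{e_0}_*R_K$.
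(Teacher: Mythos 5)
The route you propose is genuinely different from the paper's. The paper works with the \emph{absolute} Frobenius trace $\Phi_e$ of $R_{K^{1/p^e}}$ (premultiplied by $c$), proves the monotonicity $U_e\subseteq U_{e+1}$ of the surjectivity loci by an explicit argument using the $K^{1/q}$-linear retraction $K^{1/pq}\to K^{1/q}$, and then invokes the ascending chain condition for open subsets of the noetherian space $\maxSpec(R_K)$ before finally factoring $\Phi_e$ through $\Tr^e_{R_K|K}(F^e_*c\cdot -)$. Your bootstrapping step is a cleaner substitute for that monotonicity argument: the identity $\Tr^{e+1}_{R_K|K}(F^{e+1}_*c\cdot -)=\Tr^1_{R_{K^{1/p^e}}|K^{1/p^e}}\circ F_*\bigl(\Tr^e_{R_K|K}(F^e_*c\cdot -)\bigr)$ is correct, and surjectivity of the "one-step" relative trace does hold in your setting. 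One correction, though: the surjectivity of $\Tr^1_{R_{K^{1/p^e}}|K^{1/p^e}}$ does not follow from $F$-injectivity of $R_{K^{1/p^e}}$ as you write; you need $F$-injectivity of $R_{K^{1/p^{e+1}}}$. This is because it is the absolute Frobenius of $R_{K^{1/p^{e+1}}}$, not of $R_{K^{1/p^e}}$, that factors as (relative Frobenius of $R_{K^{1/p^e}}/K^{1/p^e}$) followed by $F_*$ of the base-change inclusion $R_{K^{1/p^e}}\to R_{K^{1/p^{e+1}}}$, so it is the surjectivity of the absolute trace of $R_{K^{1/p^{e+1}}}$ that forces the second factor $\Tr^1_{R_{K^{1/p^e}}|K^{1/p^e}}$ to be surjective. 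This is harmless here because geometric $F$-rationality gives $F$-injectivity of all $R_{K^{1/p^e}}$, but the distinction between absolute and relative traces, and which ring one needs $F$-injectivity of, is exactly the sort of subtlety that wrecks Bertini for $F$-injectivity and for bare $F$-rationality, so it needs to be spelled out.

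The genuine gap is in your third step, and you are right to flag it as the obstacle. To start the bootstrap you need, for each closed point $\fm$, a \emph{single} exponent $e_0(\fm)$ at which $cF^{e_0}\colon H^d_\fm(R_{K^{1/p^{e_0}}})\to H^d_\fm(R_{K^{1/p^{e_0}}})$ is injective — and the difficulty is that the exponent driving the Frobenius action and the exponent of the purely inseparable base change must be the same $e_0$. For a \emph{fixed} $e$, $F$-rationality of $R_{K^{1/p^e}}$ gives injectivity of $cF^{e'}$ for all $e'$ past some threshold $e_0(e,J)$ (via stabilization of the ideal chain $I_{e,e'}=\{x\mid cx^{p^{e'}}\in (JR_{K^{1/p^e}})^{[p^{e'}]}\}$), but a priori that threshold could grow with $e$ and you would never catch up. The paper resolves this with a uniform bound $e_0(e,J)\le\dim_K(R_K/J)$, \emph{independent of} $e$, obtained by counting the successive quotients in the chain $I_{e,e'}$ as $K^{1/p^e}$-vector spaces of dimension bounded by $\dim_K(R_K/J)$. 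Nothing in your sketch (Cartier-module simplicity, Noetherianity, quasi-compactness) substitutes for this uniformity; you will need either this dimension count or something equivalent. Once that is in place, your plan closes: openness of the surjectivity locus of $\Tr^{e_0(\fm)}_{R_K|K}(F^{e_0(\fm)}_*c\cdot -)$, the bootstrap, and quasi-compactness of $\maxSpec(R_K)$ do produce a single uniform $e_0$, sidestepping the paper's $K^{1/q}$-splitting argument entirely.
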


\begin{proof}
Let $\fm$ be a maximal ideal of $R_K$, and let $J \subseteq \fm$ be a parameter ideal. Note that $JR_{K^{1/p^e}}$ is then a parameter ideal of $R_{K^{1/p^e}}$ for all $e >0$. Now let $e >0$ be fixed, and for all $e'>0$ we let $$I_{e,e'}: = \{x \in R_{K^{1/p^e}} \mid cx^{p^{e'}} \in (JR_{K^{1/p^e}})^{[p^{e'}]}\}.$$ Since each ideal $(J R_{K^{1/p^e}})^{[p^{e'}]}$ is Frobenius closed, one can readily check that
\begin{align}
J R_{K^{1/p^e}} \subseteq \ldots \subseteq I_{e,e'+1} \subseteq I_{e,e'} \subseteq \ldots \subseteq I_{e,1} \subseteq R_{K^{1/p^e}},\label{lem:key lemma 1}
\end{align}
and since $R_{K^{1/p^e}}/JR_{K^{1/p^e}}$ is Artinian, there exists $e_0(e,J)$ such that $I_{e,e'} = I_{e,e_0}$ for all $e' \geq e_0$. It is clear that the stable value is $(JR_{K^{1/p^e}})^* = JR_{K^{1/p^e}}$. This in particular means that, if $x \in R_{K^{1/p^e}}$ is such that $cx^{p^{e'}} \in (JR_{K^{1/p^e}})^{[p^{e'}]}$ for a single $e' \geq e_0$, then $x \in JR_{K^{1/p^e}}$. In terms of local cohomology, this gives that $cF^{e'}\colon  H^{d}_J(R_{K^{1/p^e}}) \to H^{d}_J(R_{K^{1/p^e}})$ is injective for all $e' \geq e_0$, i.e., localizing at any maximal ideal $\fm$ containing $J$, the map $cF^{e'}\colon H^d_\fm(R_{K^{1/p^e}}) \to H^d_\fm(R_{K^{1/p^e}})$ is injective for all $e' \geq e_0$. Note that, since $R_K \to R_{K^{1/p^e}}$ is purely inseparable, there is a one-to-one correspondence between maximal ideals $\fm$ of $R_K$ and $\fm_e$ of $R_{K^{1/p^e}}$ for all $e$. Moreover, $\sqrt{\fm R_{K^{1/p^e}}} = \fm_e$, therefore $H^i_{\fm}(R_{K^{1/p^e}}) \cong H^i_{\fm_e}(R_{K^{1/p^e}})$ for all $e$.

We now show that $e_0$ can be chosen independently of $e$ (but still depending on $J$, for the moment). In fact, note that the chain of inclusions \eqref{lem:key lemma 1} above is a chain of ideals of $R_{K^{1/p^e}}$ containing $JR_{K^{1/p^e}}$, and each successive quotient is a finite dimensional $K^{1/p^e}$-vector space. In particular, we must have 
\[
e_0(e,J) \leq \dim_{K^{1/p^e}}(R_{K^{1/p^e}}/JR_{K^{1/p^e}}) = \dim_{K^{1/p^e}} ((R_K/J) \otimes_K K^{1/p^e}) =  \dim_K(R_K/J),
\]
which is independent of $e$. 

To summarize, for any maximal ideal $\fm$ of $R_K$ there exists $e_0=e_0(\fm)$ such that $$cF^{e}\colon  H^d_\fm(R_{K^{1/p^e}}) \to H^d_\fm(R_{K^{1/p^e}})$$ is injective for all $e \geq e_0$. Dually, the map 
$$\Phi_{e}\colon  \Ext^h_{T_{K^{1/p^e}}}(F^{e}_*(R_{K^{1/p^e}}),T_{K^{1/p^e}}) \to \Ext^h_{T_{K^{1/p^e}}}(R_{K^{1/p^e}},T_{K^{1/p^e}})$$ is surjective for all $e \geq e_0(\fm)$ when localized at $\fm_e$. 
For all $e>0$ let $M_{e} = \coker(\Phi_e)$, and let $U_{e} = \max\Spec(R_e) \smallsetminus \Supp(M_e)$. By definition, $\Phi_e$ is surjective when localized at any maximal ideal of $U_e$. Moreover, since $M_e$ is a finitely generated $R_{K^{1/p^e}}$-module we have that $\Supp(M_e)$ is a closed subset of $\Spec(R_{K^{1/p^e}})$, and thus $U_e$ is an open subset of $\max\Spec(R_{K^{1/p^e}})$. Using the homeomorphism
\begin{align}
\maxSpec(R_{K^{1/p^e}}) \cong \max\Spec(R_K),  \fm_e \longleftrightarrow \fm\label{lem: key lemma 2}
\end{align} we can view each set $U_e$ as an open subset of $\max\Spec(R_K)$. 
\begin{claim} For all $e$ we have $U_e \subseteq U_{e+1}$. 
\end{claim}
\begin{proof}[Proof of claim]
    Let $\fm \in U_e$, so that $cF^e\colon H^d_{\fm}(R_{K^{1/p^e}}) \to H^d_{\fm}(R_{K^{1/p^e}})$ is injective. We want to show that $$cF^{e+1}\colon H^d_\fm(R_{K^{1/p^{e+1}}}) \to H^d_\fm(R_{K^{1/p^{e+1}}})$$ is injective as well, so that $\fm \in U_{e+1}$. To see this, let $J$ be a parameter ideal of $S:=R_{K,\fm}$. To simplify notation in what follows, for each $e'$ denote $$S_{e'}:=R_{K^{1/p^{e'}},\fm_{e'}}$$ again using the identification \eqref{lem: key lemma 2}. Recall that $JS_e$ is a parameter ideal of $S_e$, and by assumption we have that if $x \in S_e$ is such that $cx^{p^e} \in (JS_e)^{[p^e]}$, then $x \in JS_e$. Indeed, this is equivalent to $cF^e\colon H^d_\fm(R_{K^{1/p^e}}) \to H^d_\fm(R_{K^{1/p^e}})$ being injective. Now assume that $x=\sum(x_i \otimes \lambda_i^{1/pq}) \in R_{K^{1/p^{e+1}}}$ is such that its image in $S_{e+1}$ satisfies $cx^{pq} = \sum (x_i^{pq} \otimes \lambda_i) \in (JS_{e+1})^{[pq]}$. Let $\pi\colon S_{e+1} \to S_e$ be the map induced by the splitting $K^{1/pq} \to K^{1/q}$ of the natural inclusion $K^{1/q} \subseteq K^{1/pq}$, and note that 
\[
\pi(cx^{pq}) = \pi(\sum (cx_i^{pq} \otimes \lambda_i)) = \sum (cx_i^{pq} \otimes \lambda_i) =  c \left(\sum (x_i^p \otimes \lambda_i^{1/q})\right)^q
\]
belongs to the ideal
\[
\pi((JS_e)^{[pq]}) = \pi(J^{[pq]}S_{e+1})= J^{[pq]}\pi(S_{e+1}) \subseteq (JS_e)^{[pq]}.
\]
That is, $\sum (x_i^p \otimes \lambda_i^{1/q}) \in (JS_e)^{[p]}$. If we map this through the natural inclusion $\iota\colon  S_e \to S_{e+1}$ induced by $K^{1/q} \subseteq K^{1/pq}$, we obtain that $$\iota(\sum (x_i^p \otimes \lambda_i^{1/q})) = \sum (x_i^p \otimes \lambda_i^{p/pq}) = (\sum(x_i\otimes \lambda_i^{1/pq}))^p =x^p \in \iota(J^{[p]}S_e) \subseteq (JS_{e+1})^{[p]}$$ so that $x \in JS_{e+1}$ since $JS_{e+1}$ is Frobenius closed as $S_{e+1}$ is $F$-rational. This shows that $$cF^{e+1}\colon H^d_{\fm}(R_{K^{1/p^{e+1}}}) \to H^d_{\fm}(R_{K^{1/p^{e+1}}})$$ is injective, proving the claim.
\end{proof}

The claim shows that $U_1 \subseteq \ldots U_e \subseteq U_{e+1} \subseteq \ldots$ in  $\max\Spec(R_K)$ is an ascending chain of open subsets of the noetherian topological space $\max\Spec(R_K)$, which must then stabilize. Given that, for any $\fm \in \Spec(R_K)$, there exists $e_0$ such that $\fm \in U_e$ for all $e \geq e_0$, we conclude that there exists $e_0>0$ such that $U_{e_0} = \Spec(R_K)$. In other words, the map
$$\Phi_e\colon \Ext^h_{T_{K^{1/p^e}}}(F^e_*(R_{K^{1/p^e}}),T_{K^{1/p^e}}) \to \Ext^h_{T_{K^{1/p^e}}}(R_{K^{1/p^e}},T_{K^{1/p^e}})$$ is surjective for all $e \geq e_0$. Recall that $R_{K^{1/p^e}} = R\otimes_A K^{1/p^e}$. To conclude, note that we can factor the Frobenius $R_{K^{1/p^e}}\to F^e_*(R_{K^{1/p^e}})$ as
\begin{equation*}
    \begin{tikzcd}
        R_{K^{1/p^e}}\arrow[r,"F^e"]\arrow[d] & F^e_*(R_{K^{1/p^e}})\\
        R^{1/p^e}\otimes_{A^{1/p^e}} K^{1/p^e}\arrow[r,"\cong"] & F^e_*(R_K)\arrow[u]
    \end{tikzcd}
\end{equation*}
showing that the induced map $\Phi_e$ factors through $$\Tr^e_{R_K|K}(F^e_*c -)\colon  \Ext^h_{T_{K^{1/p^e}}}(F^e_*(R_K),T_{K^{1/p^e}}) \to \Ext^h_{T_{K^{1/p^e}}}(R_{K^{1/p^e}},T_{K^{1/p^e}}).$$ This map is then surjective for all $e \geq e_0$ as desired.
\end{proof}

\begin{theorem}
    \label{theorem:A2}
   Let $k$ be a perfect field of prime characteristic $p>0$ and $A$ an integral domain finitely generated over $k$. Suppose $\varphi\colon A\to R$ is a finite type morphism of $F$-finite domains. If $R_K$ is geometrically $F$-rational over $K$, where $K$ is the fraction field of $A$, then there exists a non-empty open subset $U\subseteq \max\Spec(A)$ such that for all $\fm\in U$ one has $R_{\kappa(\fm)}$ is (geometrically) $F$-rational over $\kappa(\fm)$.
\end{theorem}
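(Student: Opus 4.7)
The plan is to spread out the surjectivity produced by \cref{key lemma} from the generic fiber to a non-empty open subset of $\maxSpec A$, and then to convert this surjectivity into $F$-rationality of the fibers using \cref{prop: global Frobenius trace}. First I would reduce to \cref{setting-B2}: by generic flatness, generic freeness of the relative canonical module, and openness of the Cohen--Macaulay locus, after inverting a single nonzero element of $A$ we may assume $A$ is a regular finitely generated $k$-algebra, $\varphi\colon A \to R$ is flat of finite type, $R$ is Cohen--Macaulay, and $R = T/\fP$ for a polynomial algebra $T = A[X_1, \ldots, X_n]$ with $h = \Ht \fP$. Since $k$ is perfect and $A$ is of finite type over $k$, for every closed point $\fm \in \maxSpec A$ the residue field $\kappa(\fm)$ is a finite and hence perfect extension of $k$; in particular $\kappa(\fm)^{1/p^e} = \kappa(\fm)$ for all $e$, and once $R_{\kappa(\fm)}$ is shown to be $F$-rational on an open subset, \cref{thm:F-sing-base-change} \ref{thm:F-rat-base-change} applied along any finite (automatically separable) extension $\kappa(\fm) \subseteq \ell$ promotes this to geometric $F$-rationality.

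Next I would produce a single element $c \in R$ that is a parameter test element for every fiber $R_{\kappa(\fm)}$ over a common open subset of $\maxSpec A$. Starting from a parameter test element of $R_K$ provided by \cite[Theorem~5.10]{HH94a}, clear denominators to land in $R$; then, after inverting finitely many additional elements of $A$, arrange that $c \in R_{\kappa(\fm)}^\circ$ and $R_{\kappa(\fm)}[c^{-1}]$ is regular for every $\fm$ in a common open neighborhood of the generic point (using generic flatness together with the openness of the regular locus of the fibers, which is available because $k$ is perfect). With this $c$, \cref{key lemma} applied to $R_K$ produces $e_0 > 0$ such that $\Tr^{e_0}_{R_K|K}(F^{e_0}_*c -)$ is surjective.

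I would then spread this surjectivity out. Let $M$ be the cokernel of the relative trace
\[
\Tr^{e_0}_{R|A}(F^{e_0}_*c -) \colon \Ext^h_{T_{A^{1/p^{e_0}}}}\bigl(R^{1/p^{e_0}}, T_{A^{1/p^{e_0}}}\bigr) \to \Ext^h_{T_{A^{1/p^{e_0}}}}\bigl(R_{A^{1/p^{e_0}}}, T_{A^{1/p^{e_0}}}\bigr),
\]
a finitely generated $R_{A^{1/p^{e_0}}}$-module. By \cref{lemma base change}, base changing along $A^{1/p^{e_0}} \to K^{1/p^{e_0}}$ converts this map into the corresponding trace for $R_{K^{1/p^{e_0}}}$ over $K^{1/p^{e_0}}$, so $M \otimes_{A^{1/p^{e_0}}} K^{1/p^{e_0}} = 0$. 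Finite generation yields a nonzero $a \in A^{1/p^{e_0}}$ with $aM = 0$; setting $U := D\bigl(a^{p^{e_0}}\bigr) \subseteq \Spec A$ and using that $\Spec A^{1/p^{e_0}} \to \Spec A$ is a universal homeomorphism, $M$ vanishes on the preimage of $U$. For each closed point $\fm \in U$, perfectness of $\kappa(\fm)$ together with \cref{lemma base change} identifies the fiber of the relative trace at $\fm$ with the absolute trace $\Tr^{e_0}_{R_{\kappa(\fm)}}(F^{e_0}_*c -)$, which is therefore surjective. Combined with our choice of $c$ as a parameter test element of $R_{\kappa(\fm)}$, \cref{prop: global Frobenius trace} yields $F$-rationality of $R_{\kappa(\fm)}$, hence geometric $F$-rationality by the earlier remark.

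The main obstacle in carrying this plan through is ensuring that the fixed element $c$ remains a parameter test element on each fiber $R_{\kappa(\fm)}$: verifying that the Hochster--Huneke criterion ``$c \in R^\circ$ and $R[c^{-1}]$ regular'' specializes to $R_{\kappa(\fm)}$ on a common open subset requires carefully combining generic flatness with the openness of the regular locus in fibers and leaning on the fact that $k$ is perfect in order to guarantee geometric (not merely formal) regularity of fibers.
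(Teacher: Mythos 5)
Your overall strategy---reduce to \cref{setting-B2}, produce a single test element $c$ working for every closed fiber, invoke \cref{key lemma} to obtain surjectivity of the twisted trace on the generic fiber, kill the cokernel of the relative trace $\Tr^{e_0}_{R|A}(F^{e_0}_*c\,-)$ by inverting a single element of $A$, and then specialize via \cref{lemma base change} using perfectness of $\kappa(\fm)$ and \cref{prop: global Frobenius trace}---matches the paper's proof closely. The cokernel-killing step is fine: $M$ is finitely generated over the Noetherian ring $R_{A^{1/p^{e_0}}}$, vanishes after inverting $A^{1/p^{e_0}}\smallsetminus\{0\}$, and hence is killed by a single nonzero $a$. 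However, your construction of the uniform test element $c$ has a genuine gap.

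You take $c$ to be a parameter test element of $R_K$ produced by \cite[Theorem~5.10]{HH94a}, so that $(R_K)_c$ is regular, and then propose to spread out the regularity of $(R_K)_c$ to regularity of $R_{\kappa(\fm)}[c^{-1}]$ over a common open neighborhood of the generic point. This step fails as stated: regularity of the generic fiber does not pass to special fibers unless the generic fiber is \emph{geometrically} regular (i.e.\ smooth) over $K$. Since $K=\Frac(A)$ is generally imperfect, regularity of $(R_K)_c$ does not imply smoothness, and there are elementary examples (e.g.\ $A=\FF_p[t]$, $R=A[x]/(x^p-t)$, $c=1$) where the generic fiber is a field yet every closed fiber is non-reduced. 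Perfectness of $k$ (hence of $\kappa(\fm)$) helps only \emph{after} you know $R_{\kappa(\fm)}[c^{-1}]$ is regular---it converts that into geometric regularity over $\kappa(\fm)$---but it says nothing about geometric regularity over $K$, which is what spreading out requires. The paper's fix is exactly to sidestep this by choosing $c$ in the Jacobian ideal of $R$ over $A$: this ideal is nonzero because $R_K$ is geometrically reduced, it commutes with base change, and so after one principal localization of $A$ the image of $c$ is a nonzero element of the Jacobian of $R_L$ over $L$ for every $\fm$ and every finite $L\supseteq\kappa(\fm)$, whence $c$ is a test element of each fiber directly by \cite[Theorem on Page 217]{Hoc07}. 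Replacing your choice of $c$ with an element of the Jacobian ideal makes the rest of your argument go through.
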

\begin{proof}
    If $\fm$ is a maximal ideal of $A$, then $k\to A/\fm$ is a separable extension. By \cref{thm:F-sing-base-change}\ref{thm:F-rat-base-change}, it suffices to show there is an open subset $U\subseteq \max\Spec(A)$ so that, for all $\fm\in U$, $R_{\kappa(\fm)}$ is $F$-rational.

    The regular locus of $A$ and the flat locus of $\varphi$ are open, so we can replace $A$ by a principal localization and assume $A$ is regular and $\varphi$ is flat. Assume that $T = A[X_1,\ldots, X_n]$ is a polynomial extension of $A$ mapping onto $R$ and $R\cong T/\fP$. Let $h$ be the height of $\fP$. By generic freeness, \cite[\href{https://stacks.math.columbia.edu/tag/051S}{Tag 051S}]{stacks-project}, there exists $0\not = a\in A$ so that $\Ext_A^{h+i}(R, T)_a = 0$ for all $i\ne 0$. We can replace $A$ by $A_{a}$ and assume $R$ is Cohen--Macaulay. Since $R_K$ is geometrically normal, by \cite[Corollaire 9.9.5]{EGA} we can further shrink $A$ to assume that all fibers of $\varphi$ are geometrically normal and so that $R$ is also a normal domain by \cite[Theorem 23.9]{Mat86}. 

    We next require an element $c\in R$ that serves as test element of each $R_{L}$ ranging over all finite extensions $L\supseteq \kappa(\fm)$ and all $\fm$ belonging to a non-empty open set of maximal ideals $U\subseteq \maxSpec(A)$. The existence of such an element is outlined in \cite[Section~5.2]{PSZ18} and reproduced here for convenience. The Jacobian ideal of $R$ over $A$ is nonzero as $R_K$ is geometrically reduced, commutes with base change by \cite[Discussion~4.4.7]{SH06} and \cite[Proposition~16.4]{Eis95}. If $c\in J$, then by generic freeness we can replace $A$ by a principal localization and assume the image of $c$ is nonzero in $R_{\kappa(\fm)}$ for all maximal ideals $\fm$ of $A$. Even further, $k\subseteq \kappa(\fm)$ is a finite field extension with $k$ perfect, therefore $\kappa(\fm)$ is also perfect. It follows that for all maximal ideals $\fm$ of $A$ and all finite extensions $\kappa(\fm)\subseteq L$, the image of $c$ is a nonzero element of the Jacobian ideal of $R_{L}$ over the field $L$ and therefore is a test element of $R_{L}$ by \cite[Theorem on Page 217]{Hoc07}.

    For each $e\geq 1$ note that $T_{A^{1/p^e}}=A^{1/p^e}[X_1,\ldots,X_n]$. Let
    \[
    \Tr^e_{R|A}\colon  \Ext^h_{T_{A^{1/p^e}}}(R^{1/p^e},T_{A^{1/p^e}}) \to \Ext^h_{T_{A^{1/p^e}}}(R_{A^{1/p^e}},T_{A^{1/p^e}})
    \]
    be the map defined above. Let $c\in R$ be an element that serves as a test element $R_{\kappa(\fm)}$ for all maximal ideals of $\fm$ of $\Spec(A)$ as described above. By assumption, $R_K$ is geometrically $F$-rational over $K$, therefore by \cref{key lemma} there exists $e>0$ such that 
    \[
    \Tr^e_{R|K}(F^e_*c -) \colon  \Ext^h_{T_{K^{1/p^e}}}(F^e_*(R_K),T_{K^{1/p^e}}) \to \Ext^h_{T_{K^{1/p^e}}}(R_{K^{1/p^e}},T_{K^{1/p^e}})
    \]
    is surjective. Note that, since $F^e_*(R_K) \cong R^{1/p^e} \otimes_{A^{1/p^e}} K^{1/p^e}$ and $R^{1/p^e}$ is a finite $T_{A^{1/p^e}}$-module, we have that 
    \begin{align*}
    \Ext^h_{T_{K^{1/p^e}}}(F^e_*(R_K),T_{K^{1/p^e}})  &\cong \Ext^h_{T_{A^{1/p^e}} \otimes_{A^{1/p^e}} K^{1/p^e}}(F^e_*R \otimes_{A^{1/p^e}} K^{1/p^e},T_{A^{1/p^e}}\otimes_{A^{1/p^e}} K^{1/p^e})\\
    &\cong \Ext^h_{T_{A^{1/p^e}}}(R^{1/p^e},T_{A^{1/p^e}})_W,
    \end{align*}
 where $W$ is the multiplicatively closed set $A \smallsetminus \{0\}$; here we use that $(F^e_*A)_W \cong F^e_*(A_W) = K^{1/p^e}$. Note, then, that $\Tr^e_{R|K}(F^e_*c -) = \left(\Tr^e_{R|A}(F^e_*c -)\right)_W$, and thus after inverting a non-zero element in $A$ we may assume that $\Tr^e_{R|A}(F^e_*c -)$ is surjective. 
    
    Now let $I \subseteq R$ be the radical ideal defining the non-$F$-rational locus of $R$. If we let $J=I \cap A$, then we observe that $J$ is a non-zero ideal of $A$. In fact, otherwise, there is a minimal prime $\fp$ of $I$ in $R$ such that $\fp \cap A = (0)$. But then $R_\fp$ is not $F$-rational, and it is a localization of $R_K$ which is $F$-rational by assumption; this gives a contradiction.
    
    Let $U = \max\Spec(A) \smallsetminus \mathbb{V}(J)$, which is then a non-empty Zariski-open subset of $\max\Spec(A)$. If $\fm \in U$, there exists $a \in A \smallsetminus \fm$ such that $\Tr^e_{R|A}(F^e_*c -)_a$ is surjective. But then, by right exactness of tensor products, we have that 
    \[
    \Tr^e_{R|A}(F^e_*c-)_a \otimes_{F^e_*A} F^e_*(A/\fm) \cong \Tr^e_{R|A}(F^e_*c-) \otimes_{F^e_*A} F^e_*(A/\fm)
    \]
    is still surjective. Note that the isomorphism follows from the fact that $$F^e_*(A/\fm)_a \cong F^e_*((A/\fm)_a) \cong F^e_*(A/\fm)$$ since $a$ is already a unit in $A/\fm = \kappa(\fm)$. By Lemma \ref{lemma base change}, using also the fact that $\kappa(\fm)$ is perfect, we conclude that
    \[
    \Tr^e_{R_{\kappa(\fm)}|\kappa(\fm)}(F^e_*c -)\colon  \Ext^h_{T_{\kappa(\fm)}}(F^e_*(R_{\kappa(\fm)}),T_{\kappa(\fm)}) \to \Ext^h_{T_{\kappa(\fm)}}(R_{\kappa(\fm)},T_{\kappa(\fm)})
    \]
    is surjective. As already observed, at every maximal ideal $\fn$ of $R_{\kappa(\fm)}$ such a map is the Matlis dual of $c F^e\colon  H^{\Height(\fn)}_{\fn}(R_{\kappa(\fm)}) \to H^{\Height(\fn)}_{\fn}(R_{\kappa(\fm)})$, which is then injective. 
    We conclude that $R_{\kappa(\fm)}$ is (geometrically) $F$-rational for all $\fm \in U$.
\end{proof}

As a corollary, we obtain \ref{CGM-B2} up to an integrality assumption on $Y_\eta$.

\begin{corollary}\label{cor:A2-F-rational}
    Let $Y\to S$ be a finite type morphism of $F$-finite schemes where $S$ is integral with generic point $\eta\in S$. Suppose additionally that $S$ is finite type over an algebraically closed field. If $Y_\eta$ is integral and geometrically $F$-rational over $\kappa(\eta)$, then there exists an open neighborhood $\eta\in U\subseteq S$ such that $Y_s$ is $F$-rational for every closed point $s\in U$.
\end{corollary}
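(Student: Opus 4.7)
The plan is to reduce the corollary to the affine, integral setting of \cref{theorem:A2}. First I would shrink $S$ to an affine open neighborhood $\Spec(A)$ of $\eta$ (any nonempty open of the irreducible scheme $S$ contains $\eta$), where $A$ is an integral domain of finite type over the algebraically closed, hence perfect, base field $k$. Since $Y_\eta$ is integral, exactly one irreducible component of $Y$ dominates $S$; the images of the remaining components form a proper closed subset of $\Spec(A)$, so after inverting a suitable element of $A$ I may assume $Y$ is irreducible. A parallel spreading-out argument using that $Y_\eta$ is reduced permits a further shrink so that $Y$ becomes reduced, hence integral.

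Next, cover $Y$ by finitely many affine opens $\Spec(R_1), \ldots, \Spec(R_m)$, each $R_i$ an $F$-finite integral domain of finite type over $A$. With $K = \Frac(A)$, each generic fiber $\Spec((R_i)_K)$ is a nonempty open subscheme of $Y_\eta$ and is therefore geometrically $F$-rational over $K$. Applying \cref{theorem:A2} to each morphism $A \to R_i$ produces a nonempty open $U_i \subseteq \maxSpec(A)$ on which $(R_i)_{\kappa(\fm)}$ is geometrically $F$-rational over $\kappa(\fm)$. Inspecting the proof of \cref{theorem:A2}, each $U_i$ is the intersection with $\maxSpec(A)$ of a nonempty Zariski open $V_i \subseteq \Spec(A)$, namely the complement of the vanishing locus of the nonzero ideal cutting out the non-$F$-rational locus. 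Setting $U := \bigcap_{i=1}^m V_i$ then yields a nonempty open of $\Spec(A)$ containing $\eta$.

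For every closed point $s \in U$, the affines $\Spec((R_i)_{\kappa(s)})$ cover $Y_s$ and each is $F$-rational by construction. Because $F$-rationality is a local property, $Y_s$ itself is $F$-rational, giving the required conclusion.

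The substance of the argument lives in \cref{theorem:A2}; the role of this corollary is primarily to globalize that statement from a single affine chart to an arbitrary finite-type $S$-scheme. The only delicate preparatory point is the initial reduction ensuring $Y$ may be taken to be integral, which relies on standard spreading-out results for irreducibility and reducedness of the total space applied to the integrality hypothesis on $Y_\eta$; beyond that, the remaining steps are routine bookkeeping on affine covers.
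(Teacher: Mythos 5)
Your proposal is correct and follows essentially the same route as the paper: reduce to the affine case, use the integrality of $Y_\eta$ together with a shrink of the base to arrange that the coordinate ring is a domain, and then invoke \cref{theorem:A2}. The paper arrives at the domain hypothesis slightly more directly: after replacing $Y$ by an affine chart $\Spec(R)$, it inverts a single nonzero element of $A$ to kill the $A$-torsion of $R$ (so $R\hookrightarrow R_K$), whence $R$ is a domain because $R_K$ is; you instead first spread out irreducibility and reducedness of $Y$ globally before covering by affines. Both are valid, and the remaining bookkeeping over the finite affine cover (intersecting the resulting opens, using irreducibility of $\Spec(A)$ to keep the intersection nonempty and containing $\eta$, and locality of $F$-rationality) is handled correctly in your write-up.
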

\begin{proof}
    The statement is local, so we assume that $Y = \Spec(R)$ and $S=\Spec(A)$, where $A$ is a domain with fraction field $K=\Frac(A)$ and $R$ is of finite type over $A$. After possibly inverting a non-zero element of $A$, we may assume that $R$ is a subring of $R_K = R \otimes_A K$. By assumption, $R_K$ is a domain and hence so is $R$. The statement now follows from \cref{theorem:A2}.
\end{proof}

\begin{remark}\label{remark: proof of A2 theorem using PSZ}
    We conclude this section with a sketch of an alternative proof of \cref{theorem:A2} using the framework of \cite{PSZ18}. However, we caution the reader that the statement of \cite[Theorem~5.13]{PSZ18} should have the proviso that the conclusion applies only to perfect points of the base variety (corresponding to $\Spec(A)$ in our setup). We provide the necessary details here to properly utilize \cite{PSZ18}. Our self-contained proof above diverges from this framework, drawing instead on the methods from \cite{HH00}.
    
    To cite \cite[Theorem~5.13]{PSZ18} we require that $\varphi\colon A\to R$ be a ring homomorphism that originates from a proper morphism $W_1\to W_2$. This is accomplished as follows. Let $V_1$ and $V_2$ be the affine $k$-schemes $\Spec(R)$ and $\Spec(A)$ respectively and $j\colon V_1\to V_2$ the corresponding morphism. Let $W_1$ and $W_2$ choices of projective closures of $V_1$ and $V_2$ respectively over $k$. Let $Z$ be the closure of the morphism $V_1\to W_1\times W_2$ defined by $v_1\mapsto (v_1, j(v_1))$. Take $f$ to be the composition of maps $f\colon W_1\to Z\subseteq W_1\times W_2\xrightarrow{\pi_2}W_2$. Then $f\colon W_1\to W_2$ is a projective morphism, hence proper, so that $f^{-1}(V_2) = V_1$ and $\mathcal{O}_{W_2}(V_2)\to \mathcal{O}_{W_1}(V_1)$ is the $k$-algebra map $A\to R$. If $\eta$ is generic point of $W_2$, then $(W_2)_{\eta}\to \{\eta\}$ is \emph{relatively $F$-rational} as defined in \cite[Definition~5.10]{PSZ18}. By \cite[Theorem~5.13]{PSZ18}, there is an open neighborhood $U$ of $\eta$ so that for $s\in U$, $(W_2)_u\to \{u\}$ is relatively $F$-rational. Intersecting $U$ with with $V_2$, there is an open set of maximal ideals $\fm\in \Spec(A)$ so that $(V_2)_{\fm}\to \Spec(A/\fm)$ is relatively $F$-rational. The notions of relatively $F$-rational and geometrically $F$-rational coincide over perfect points of $\Spec(A)$. If $\fm$ is a maximal ideal of $A$, then $k\to A/\fm$ is finite, $k$ is perfect, therefore $A/\fm$ is perfect. Consequently, there is an open set of maximal ideals $\fm\in\Spec(A)$ so that $R\otimes_A \kappa(\fm)$ is geometrically $F$-rational over $\kappa(\fm)$.
\end{remark}

\section{Bertini theorems for \texorpdfstring{$F$}{F}-rational and \texorpdfstring{$F$}{F}-pure singularities} \label{sec:Bertini}

The following results constitute \cref{thm: Bertini for F-rational singularities}; this will follow almost immediately from \cref{thm:CGM} together with the results of the previous section.

\begin{theorem}[Bertini Theorems for $F$-purity + $F$-rationality]\label{thm:Bertini}
    Let $X$ be a variety over an algebraically closed field $k=\overline{k}$ of characteristic $p>0$. 
    \begin{enumerate}[label=(\alph*)]
        \item If $\phi\colon X\to \mathbb{P}^n_k$ a $k$-morphism with separably generated (not necessarily algebraic) residue field extensions and $X$ is $F$-rational and $F$-pure, then there exists a non-empty open subset $U\subseteq (\mathbb{P}_k^n)^{\vee}$ so that for each hyperplane $H\in U$, $\phi^{-1}(H)$ is $F$-rational and $F$-pure.\label{thm:Bertini-1}
        \item If $X\subseteq \mathbb{P}^n_k$ is a closed immersion and $X$ is $F$-rational and $F$-pure, then a general hyperplane $H\subseteq X$ is $F$-rational and $F$-pure.\label{thm:Bertini-2}
        \item More generally, if $X\subseteq \mathbb{P}^n_k$ and $U_X\subseteq X$ is the $F$-rational $F$-pure locus of $X$, then for a general hyperplane $H\in (\mathbb{P}_k^n)^{\vee}$, $U_X\cap H\subseteq U_H.$\label{thm:Bertini-3}
    \end{enumerate}
\end{theorem}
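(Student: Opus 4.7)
The plan is to apply \cref{thm:CGM} to the local property $\cP$ of $F$-finite noetherian rings defined by ``$F$-rational and $F$-pure.'' Once this property is shown to satisfy all four of the axioms \ref{CGM-B0}, \ref{CGM-B1}, \ref{CGM-B2}, and \ref{CGM-3}, parts (a), (b), and (c) of \cref{thm:Bertini} will follow from the corresponding conclusions of \cref{thm:CGM}.

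The openness axiom \ref{CGM-3} is immediate, since the $F$-rational and $F$-pure loci of a scheme of finite type over a field are both open, and their intersection is therefore open as well. Axiom \ref{CGM-B0} follows from \cref{thm:F-sing-base-change}, as the map $Y \to Y \times_K \Spec(K[x])$ is flat with geometrically regular fibers, and both $F$-purity and $F$-rationality descend along such extensions. Axiom \ref{CGM-B1} is precisely \cref{B1:F-rational-F-pure}. Finally, axiom \ref{CGM-B2} splits into two halves: the $F$-pure portion is supplied by \cref{thm:a-2-F-pure}, while the $F$-rational portion is \cref{cor:A2-F-rational}, subject to an integrality hypothesis on the generic fiber.

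The main subtlety I anticipate is arranging this integrality hypothesis in the setting to which \cref{thm:CGM} is applied. In the proof of \cref{thm:CGM}, the generic fiber that arises is $Y_\eta = X \times_{\PP^n_k} Z_\eta$, where $Z \subseteq \PP^n_k \times_k (\PP^n_k)^\vee$ is the incidence correspondence and $\eta$ is the generic point of $(\PP^n_k)^\vee$. Because $X$ is an integral variety and the projection $Z \to \PP^n_k$ is a projective bundle, the scheme $X \times_{\PP^n_k} Z$ is integral. When the composition $X \times_{\PP^n_k} Z \to (\PP^n_k)^\vee$ is dominant, $Y_\eta$ is a localization of this integral scheme and hence integral, so \cref{cor:A2-F-rational} applies. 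When the composition is not dominant, $Y_\eta$ is empty and $\phi^{-1}(H)$ is empty for a general hyperplane $H$, so the conclusion of Bertini holds vacuously.

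With the axioms in hand, part (a) of \cref{thm:Bertini} follows by applying part (a) of \cref{thm:CGM} to the given morphism $\phi$. Part (b) is the case in which $\phi$ is taken to be a closed embedding, supplied by the first assertion of part (c) of \cref{thm:CGM}. Finally, part (c) follows from the second assertion of part (c) of \cref{thm:CGM}, which additionally invokes axiom \ref{CGM-3}, yielding the containment $U_X \cap H \subseteq U_H$ for a general hyperplane $H$.
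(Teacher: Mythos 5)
Your proposal is correct and follows essentially the same approach as the paper: verify that $\cP=$ ``$F$-rational and $F$-pure'' satisfies \ref{CGM-B0}, \ref{CGM-B1}, \ref{CGM-B2}, and \ref{CGM-3}, then invoke \cref{thm:CGM}. The one place where you diverge is in handling the integrality hypothesis of \cref{cor:A2-F-rational}: you observe that $Z \to \PP^n_k$ is a $\PP^{n-1}$-bundle, so $Y = X \times_{\PP^n_k} Z$ is a $\PP^{n-1}$-bundle over the integral variety $X$ and hence integral, and then $Y_\eta$ is integral (or empty) as the generic fiber of a dominant morphism from an integral scheme; you also correctly dispose of the non-dominant case as vacuous. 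The paper instead deduces normality and irreducibility of $Y_\eta$ directly from those of $X$ by citing Flenner--O'Carroll--Vogel. Both routes are valid and of comparable length; your version is a bit more self-contained, trading a citation for an elementary bundle observation.
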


\begin{proof}
    Let $\phi\colon X\to \PP^n_k=:\PP$ be as in \ref{thm:Bertini-1}, and let $Z$ denote the reduced closed subscheme of $\PP \times_k \PP^\vee$ given by the Zariski-closure of the incidence correspondence
\begin{equation*}
    \{(x,H)\in \PP \times_k \PP^\vee\mid x\in H\}.
\end{equation*}
In the proof of \cref{thm:CGM}, the axiom \ref{CGM-B0} is applied to $Y:=X \times_\PP Z$, while \ref{CGM-B1} is applied to the map $Y_{\eta}\times_{\kappa(\eta)} L\to X$ for (finite) extensions $\kappa(\eta)\subseteq L$. Moreover, \ref{CGM-B2} is applied to $\rho\colon Y\to \PP^\vee=: S$. Thus we may assume that all schemes in axioms \ref{CGM-B0}-\ref{CGM-B2} and \ref{CGM-3} are $F$-finite. By the above, in \ref{CGM-B2} we may assume that $S$ is finite type over an algebraically closed field.

We have already observed at the beginning of \cref{sec:FratFpure} that the property $\cP=$ ``$F$-rational and $F$-pure'' satisfies \ref{CGM-B0} and \ref{CGM-3}. By \cref{B1:F-rational-F-pure}, it also satisfies \ref{CGM-B1}. Now note that $X$ is normal and irreducible; by \cite[Proposition 1.5.10 and Corollary 3.4.14]{FOV99}, the same holds for $\rho^{-1}(\eta) = Y_\eta$. Thus, it suffices to prove \ref{CGM-B2} where $Y_\eta$ is further assumed to be integral, and this follows from \cref{cor:A2-F-rational} and \cref{thm:a-2-F-pure}. This concludes the proof of part \ref{thm:Bertini-1}, thanks to \cref{thm:CGM} \ref{thm:CGM-1}.

The proof of the other statements follow just as in \cite{CGM86} from \cref{thm:CGM}.
\end{proof}

    As a consequence of \cref{thm:Bertini} \ref{thm:Bertini-2}, we also obtain a second theorem of Bertini for $F$-rational and $F$-pure singularities.

\begin{corollary}[cf. {\cite[Corollary 1]{CGM86}}]
    Let $V$ be an algebraic variety over $k=\overline{k}$ and let $S$ be a finite dimensional linear system on $V$. Assume that the map $V\dashrightarrow\PP^n_k$ corresponding to $S$ induces (whenever defined) separably generated field extensions. Then for the general element $H$ of $S$, viewed as a subscheme of $V$, is $F$-rational and $F$-pure except perhaps at the base points of $S$ and at the points of $V$ which are not $F$-rational or $F$-pure.
\end{corollary}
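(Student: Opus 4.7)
The plan is to deduce this corollary directly from \cref{thm:CGM} part \ref{thm:CGM-2}, which is a purely formal linear-system consequence of the four axioms \ref{CGM-B0}, \ref{CGM-B1}, \ref{CGM-B2}, and \ref{CGM-3}. All of the work has in fact already been done in the proof of \cref{thm:Bertini} \ref{thm:Bertini-1}: \ref{CGM-B0} follows from \cref{thm:F-sing-base-change}, \ref{CGM-B1} is \cref{B1:F-rational-F-pure}, \ref{CGM-3} is the openness of the $F$-rational and $F$-pure loci on schemes of finite type over a field, and \ref{CGM-B2} is obtained by combining \cref{cor:A2-F-rational} (for $F$-rationality, under an integrality hypothesis on the generic fiber) with \cref{thm:a-2-F-pure} (for $F$-purity).

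The only subtle point is the integrality requirement in our verification of \ref{CGM-B2} for $F$-rationality. As in the proof of \cref{thm:Bertini} \ref{thm:Bertini-1}, I would handle this by localizing the argument to the $F$-rational and $F$-pure locus $U_V \subseteq V$: this is a normal open subvariety, and the incidence construction built from the rational map $V \dashrightarrow \PP^n_k$ defined by $S$ has generic fiber which is normal and irreducible (hence integral) by \cite[Proposition 1.5.10 and Corollary 3.4.14]{FOV99}. This is precisely the same reduction employed in the proof of \cref{thm:Bertini}, so no new ideas are required.

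With the four axioms verified for the property $\cP = $ \emph{$F$-rational and $F$-pure}, \cref{thm:CGM} \ref{thm:CGM-2} yields that the general element $H$ of the linear system $S$, viewed as a subscheme of $V$, satisfies $\cP$ off of the union of the base locus $\Bs(S)$ and the complement of $\cP(V) = U_V$, which is exactly the statement of the corollary. The hard part (proving \ref{CGM-B1} and \ref{CGM-B2}, which are where the interaction between $F$-rationality and $F$-purity is essential) has already been carried out, so this step is essentially an invocation of the axiomatic framework.
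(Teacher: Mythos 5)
Your proposal is correct and follows essentially the same route as the paper. The paper states the corollary with only the remark ``As a consequence of \cref{thm:Bertini} \ref{thm:Bertini-2}, we also obtain a second theorem of Bertini,'' and the proof of \cref{thm:Bertini} explicitly notes that parts (b), (c) ``follow just as in \cite{CGM86} from \cref{thm:CGM}.'' Your invocation of \cref{thm:CGM} \ref{thm:CGM-2} with the axioms verified in \cref{sec:FratFpure}, together with the observation that the integrality hypothesis in \cref{cor:A2-F-rational} is met by localizing to the normal irreducible open locus $U_V$ and citing \cite[Proposition 1.5.10 and Corollary 3.4.14]{FOV99}, is precisely the reduction the paper performs inside the proof of \cref{thm:Bertini} \ref{thm:Bertini-1}; you have merely made the implicit argument explicit.
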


\section{On \texorpdfstring{$\Q$}{Q}-Gorenstein \texorpdfstring{$F$}{F}-Pure Non-\texorpdfstring{$F$}{F}-Regular Rings}
This purpose of this section is to partially clarify the types of varieties to which \cref{thm: Bertini for F-rational singularities} might be applied. Recall that by \cite{SZ13}, \cref{thm: Bertini for F-rational singularities} only has new content when $X$ is \emph{not} strongly $F$-regular. When the ambient normal Cohen--Macaulay variety $X\subseteq\PP^n$ with canonical divisor $K_X$ is $\Q$-Gorenstein (that is, when $nK_X\sim 0$ for some $n>0$), \cref{prop: char cong to 1 mod p} imposes restrictions on what the integer $n$ can be.

We briefly review the necessary language of divisors employed in the statement and the proof of the next theorem. Let $(R,\fm)$ be a local normal domain with fraction field $K=\Frac(R)$, and let $D$ be a Weil divisor on $\Spec(R)$. We denote the corresponding divisorial ideal by $$R(D):=\{x\in K\smallsetminus\{0\}\mid \Div(x)+D\geq 0\}\cup\{0\},$$ that is, the global sections of $\cO_{\Spec R}(D)$. A canonical divisor of $X=\Spec(R)$ is a Weil divisor $K_X$ so that $R(K_X)\cong \omega_R$ is a canonical module of $R$. By a $\Q$-Gorenstein ring, we mean the following:
\begin{definition}
    A local normal domain $(R,\fm)$ with a canonical divisor $K_X$ of $X=\Spec(R)$ is \emph{$\Q$-Gorenstein} if there exists an integer $n>0$ so that $nK_X \sim 0$, i.e., $R(nK_X)\cong R$. If $R$ is $\Q$-Gorenstein, then the \emph{$\Q$-Gorenstein index of $R$} is the smallest positive integer $n>0$ so that $nK_X\sim 0$.
\end{definition}

\begin{theorem}[{\cite[Corollary 2.2]{Pol22}}] Let $(R,\fm)$ be a local normal domain and let $M$ be a finitely generated ($S_2$) $R$-module. If $D_1,\ldots, D_t$ are divisors representing distinct elements of the divisor class group of $R$ and such that $R(D_i)$ is a direct summand of $M$ for each $i$, then $\bigoplus\limits_{i}^t R(D_i)$ is also a direct summand of $M$.\label{thm:direct sums}
\end{theorem}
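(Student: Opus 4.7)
The plan is to proceed by induction on $t$. The case $t = 1$ is exactly the hypothesis. For the inductive step, assume that $\bigoplus_{i=1}^{t-1} R(D_i)$ is a direct summand of $M$, so we may write $M = \bigoplus_{i<t} R(D_i) \oplus N$ for some submodule $N$. Let $\iota_i \colon R(D_i) \hookrightarrow M$ and $\sigma_i \colon M \twoheadrightarrow R(D_i)$ denote the inclusions and projections associated to this decomposition, and let $\iota_N \colon N \hookrightarrow M$, $p \colon M \twoheadrightarrow N$ be the inclusion and projection for $N$. Fix also an inclusion $\iota_t \colon R(D_t) \hookrightarrow M$ and a splitting $\pi_t \colon M \twoheadrightarrow R(D_t)$. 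The goal reduces to showing that $p \circ \iota_t \colon R(D_t) \to N$ is a split monomorphism, as then $R(D_t)$ becomes a direct summand of $N$ and the induction closes.

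The natural candidate for a left inverse of $p \iota_t$ is $\pi_t \iota_N$, possibly rescaled. Using $\iota_N \circ p = \id_M - \sum_{i<t} \iota_i \sigma_i$, a direct computation yields
\[
(\pi_t \iota_N)(p \iota_t) = \id_{R(D_t)} - \sum_{i<t} (\pi_t \iota_i)(\sigma_i \iota_t) \in \Hom_R(R(D_t), R(D_t)).
\]
Under the standard identifications $\Hom_R(R(D_j), R(D_k)) \cong R(D_k - D_j)$ and $\Hom_R(R(D_t), R(D_t)) \cong R$, which hold since $R$ is a normal domain and the $R(D_i)$ are rank-one reflexive, each $\sigma_i \iota_t$ corresponds to multiplication by some $a_i \in R(D_i - D_t)$ and each $\pi_t \iota_i$ to multiplication by some $b_i \in R(D_t - D_i)$. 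Thus the composition displayed above is multiplication by $1 - \sum_{i<t} b_i a_i \in R$.

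The crux is the following claim: since $[D_i] \neq [D_t]$ in $\Cl(R)$, the product ideal $R(D_t - D_i) \cdot R(D_i - D_t) \subseteq R$ is contained in $\fm$. Indeed, if it equalled $R$, one could write $1 = \sum_j \beta_j \alpha_j$ with $\alpha_j \in R(D_i - D_t)$ and $\beta_j \in R(D_t - D_i)$, producing morphisms $R(D_t) \to R(D_i) \to R(D_t)$ whose composition is the identity. This would realize $R(D_t)$ as a direct summand of $R(D_i)$; since both are rank-one reflexives over a normal local domain, this forces $R(D_t) \cong R(D_i)$, hence $[D_t] = [D_i]$ in $\Cl(R)$, a contradiction. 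It follows that every $b_i a_i \in \fm$, so $u := 1 - \sum_{i<t} b_i a_i$ is a unit of $R$, and $u^{-1} \pi_t \iota_N \colon N \to R(D_t)$ is the desired left inverse of $p \iota_t$, completing the induction.

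The main obstacle is the class-group lemma in the last paragraph; the inductive mechanics and the hom-set identifications are standard for rank-one reflexive modules over normal domains. Notably, the $(S_2)$ hypothesis on $M$ does not appear explicitly in this line of argument, and is most likely imposed so that the theorem applies cleanly in the contexts where it is invoked rather than for this extraction step.
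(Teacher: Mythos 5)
Your overall strategy is sound: induct on $t$, write $M=\bigoplus_{i<t}R(D_i)\oplus N$, compute the obstruction
\[
(\pi_t\iota_N)(p\iota_t)=\id_{R(D_t)}-\sum_{i<t}(\pi_t\iota_i)(\sigma_i\iota_t),
\]
identify the compositions with elements $b_ia_i\in R(D_t-D_i)\cdot R(D_i-D_t)\subseteq R$, and argue that the total is a unit of $R$ so that $p\iota_t$ splits. The paper itself does not prove \cref{thm:direct sums} (it cites \cite[Corollary 2.2]{Pol22}), so there is no in-text proof to compare against; but as an argument in its own right yours is essentially right, modulo one imprecision in the key lemma.

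The gap is in the justification that $[D_i]\neq[D_t]$ forces $R(D_t-D_i)\cdot R(D_i-D_t)\subseteq\fm$. From an equality $1=\sum_j\beta_j\alpha_j$ you do \emph{not} get a single pair of maps $R(D_t)\to R(D_i)\to R(D_t)$ composing to the identity; rather, the $\alpha_j$ assemble into a map $R(D_t)\to R(D_i)^{\oplus m}$ and the $\beta_j$ into a retraction $R(D_i)^{\oplus m}\to R(D_t)$, exhibiting $R(D_t)$ as a direct summand of $R(D_i)^{\oplus m}$, not of $R(D_i)$. Concluding $R(D_t)\cong R(D_i)$ from that would need a further argument (Krull--Schmidt for rank-one reflexives is not something you can invoke for free over a merely local ring). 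The correct and cleaner route bypasses modules entirely: $R(D_t-D_i)\cdot R(D_i-D_t)=R$ says precisely that the divisorial fractional ideal $R(D_i-D_t)$ is invertible, with inverse $R(D_t-D_i)$. An invertible module over a local ring is free of rank one, so $R(D_i-D_t)\cong R$, i.e.\ $D_i-D_t\sim 0$, i.e.\ $[D_i]=[D_t]$, contradicting distinctness. With the lemma repaired this way, the rest of your induction goes through as written: each $b_ia_i\in\fm$, so $1-\sum_{i<t}b_ia_i$ is a unit $u$, and $u^{-1}\pi_t\iota_N$ is a left inverse to $p\iota_t$, giving $R(D_t)$ as a summand of $N$ and hence $\bigoplus_{i\le t}R(D_i)$ as a summand of $M$. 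Your observation that the $(S_2)$ hypothesis on $M$ is not used in this argument is also accurate; in the paper's application $M=F^e_*R$ is automatically $(S_2)$ since $R$ is normal, so the parenthetical is recording a property that holds in context rather than a needed hypothesis.
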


\begin{theorem}
    \label{prop: char cong to 1 mod p}
    Let $(R,\fm,k)$ be an $F$-finite normal domain of prime characteristic $p>0$ and $\mathbb{Q}$-Gorenstein of index $n$. If $R$ is $F$-pure, not $F$-regular, and if $R_{\fp}$ is $F$-regular for all non-maximal prime ideals $\fp\in\Spec(R)$ (for example, if $R$ has an isolated singularity), then $p\equiv 1\pmod{n}$.
\end{theorem}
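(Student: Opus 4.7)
The plan is to analyze the direct summand decomposition of $F^e_*R$ as an $R$-module into reflexive rank-one summands, using the $\Q$-Gorenstein hypothesis to single out the cyclic subgroup $\langle [K_X] \rangle \subseteq \Cl(R)$ of order exactly $n$, and to extract the congruence $n \mid p-1$ from this structure. The central tool will be \cref{thm:direct sums}, which assembles individually split reflexive summands into a single larger direct summand.

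First, I would use $F$-purity to obtain the basic splitting $R \hookrightarrow F_*R$. Tensoring this splitting with the reflexive module $R(iK_X)$ and taking reflexive hulls produces, for each $i \in \Z/n\Z$, a summand inclusion $R(iK_X) \hookrightarrow F_*R((pi \bmod n)K_X)$. This is a ``twisted'' form of $F$-purity indexed by the distinguished cyclic subgroup.

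Second, I would upgrade these twisted summands to untwisted ones using the isolated non-$F$-regular locus hypothesis. Since $R_\fp$ is strongly $F$-regular for every non-maximal prime $\fp$, standard $F$-signature-type considerations ensure that each rank-one reflexive $R_\fp$-module, in particular $R(iK_X)_\fp$, is a direct summand of $F^e_*R_\fp$ for $e$ sufficiently large. The $F$-pure splitting at $\fm$ should then supply the missing data at the closed point, promoting these punctured-spectrum summands to a global summand structure: each $R(iK_X)$ is a direct summand of $F^e_*R$ for a common $e \geq 1$. With this in hand, \cref{thm:direct sums} assembles them into a single direct summand $\bigoplus_{i=0}^{n-1} R(iK_X) \hookrightarrow F^e_*R$, realizing the full cyclic subgroup $\langle [K_X] \rangle$ as a direct summand.

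Third, I would extract the congruence $n \mid p - 1$ by reducing to the base case $e = 1$. A direct summand inclusion $R(iK_X) \hookrightarrow F_*R$ corresponds to a pair of $R$-linear maps---one in $\Hom_R(R(iK_X), F_*R) \cong F_*R(-piK_X)$ and one in $\Hom_R(F_*R, R(iK_X)) \cong F_*R((1+p(i-1))K_X)$---whose composition is the identity on $R(iK_X)$. Via the projection formula, this composition is computed by the Frobenius trace pairing of these two divisorial ideals, whose product lands in $F_*R((1-p)K_X)$. For such a pairing to realize the identity on $R(iK_X)$ simultaneously for every $i \in \Z/n\Z$, the class $(1-p)[K_X]\in \Cl(R)$ must be trivial, forcing $n \mid p - 1$.

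The main obstacle is the second step: promoting the punctured-spectrum summand structure to a global one at $\fm$, where strong $F$-regularity fails. This is where the interaction between $F$-purity at $\fm$ and strong $F$-regularity on the punctured spectrum is most delicate, and I expect the techniques of \cite{Pol22} (the source of \cref{thm:direct sums}) to play a central role, with the isolated non-$F$-regularity hypothesis providing just enough control near $\fm$ to glue local summands into a global one.
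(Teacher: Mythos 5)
Your proposal does not follow the paper's argument, and it has two genuine gaps.

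The paper's proof never attempts to realize each $R(iK_X)$ as a global direct summand of $F^e_*R$. Instead, it first invokes the Aberbach--Enescu theory of the splitting prime: since $R$ is $F$-pure but not strongly $F$-regular with non-$F$-regular locus equal to $\{\fm\}$, the splitting prime is $\fm$ and hence $I_e(\fm)=\fm$ for all $e$. This forces $F^e_*R$ to have \emph{exactly} $[k^{1/p^e}:k]$ free $R$-summands --- not merely at least that many. The rest is a contradiction argument: writing $n=p^{e_0}m$ with $p\nmid m$, one assumes $(p^e-1)K_R\not\sim 0$, tensors a free-summand decomposition of $F^{e_0}_*R$ by $R(-(p^e-1)K_R)$ and reflexifies to obtain a distinct twisted summand, assembles the two by \cref{thm:direct sums}, and then uses Grothendieck--Frobenius duality ($\Hom_R(F^e_*R,R)\cong F^e_*R(-(p^e-1)K_R)$) to show that $F^{e+e_0}_*R$ would then carry at least $2[k^{1/p^{e+e_0}}:k]$ free summands --- contradiction. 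The same count, run once more with $F^{e-1}_*R$, gives $(p-1)K_R\sim 0$. The engine is a sharp upper bound on free rank, not a gluing of punctured-spectrum summands.

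The first gap in your proposal is the step you yourself flag as the main obstacle: the passage from ``$R(iK_X)_\fp$ is a summand of $F^e_*R_\fp$ for all $\fp\ne\fm$'' to ``$R(iK_X)$ is a global summand of $F^e_*R$.'' Split-surjectivity of the evaluation map $\Hom_R(F^e_*R,R(iK_X))\otimes F^e_*R\to R(iK_X)$ is a statement that must hold at \emph{every} prime, and the hypotheses give you nothing at $\fm$ beyond the untwisted $F$-pure splitting $R\mid F_*R$; an appeal to ``$F$-purity supplying the missing data'' does not bridge this. In fact, the paper's argument shows exactly why you should be suspicious here: the splitting prime being $\fm$ means the splitting behaviour at $\fm$ is as constrained as possible, and there is no a priori reason that a twisted divisorial ideal splits off. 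The second gap is the final extraction: from a split inclusion $R(iK_X)\hookrightarrow F_*R$ you obtain $\alpha\in F_*R(-piK_X)$ and $\beta\in F_*R((1+p(i-1))K_X)$ with $\beta\alpha=\mathrm{id}$, and the product lies in the module $F_*R((1-p)K_X)$; but this module is nonzero whether or not $(1-p)K_X\sim 0$, so the existence of the pairing does not force triviality of the class $(1-p)[K_X]$. There is no valid passage here from ``such maps exist'' to ``$n\mid p-1$.'' You have correctly identified the two ingredients the paper uses --- twisting the $F$-pure splitting by divisorial ideals, and \cref{thm:direct sums} to assemble distinct reflexive summands --- but without the exact free-rank count from the splitting prime and the resulting contradiction via Frobenius duality, the argument does not close.
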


\begin{proof}
    For each $e\in\NN$ let
    \[
    I_e(\fm) = \{r\in R\mid R\xrightarrow{\cdot F^e_*r}F^e_*R\mbox{ does not split}\} 
    \]
    denote the $e$\ts{th} Frobenius splitting ideal of $R$. Because $R$ is $F$-pure and not strongly $F$-regular, the splitting prime of $R$ is $\fP = \cap_{e\in \NN}I_e(\fm)$ is a proper nonzero prime ideal of $R$ so that for all $\fp\in V(\fP)$, $R_{\fp}$ is not strongly $F$-regular, see \cite[Theorem~3.3, Corollary~3.4, and Proposition~3.6]{AE05}. Because $R_{\fp}$ is assumed to be strongly $F$-regular for all non-maximal primes $\fp\in\Spec(R)$, $\fP = \fm$ and for each $e\in\NN$, $I_e(\fm) = \fm$. In particular, $F^e_*R$ has exactly $\dim_k(F^e_*R/F^e_*\fm) = [k^{1/p^{e}}:k]$ free $R$-summands for all $e\in\NN$. 
    
    Assume that $n = p^{e_0}m$ and $p$ does not divide $m$. If $e$ is sufficiently large and divisible, then $m$ divides $p^e-1$ by Fermat's Little Theorem, $n$ divides $p^{e_0}(p^e-1)$, and $p^{e_0}(p^e-1)K_R\sim 0$. We first show that $e_0=1$, that is $(p^e-1)K_R\sim 0$. Consider a direct sum decomposition $F^{e_0}_*R\cong R^{\oplus [k^{1/p^{e_0}}:k]}\oplus -$. If we tensor with $R(-(p^e-1)K_R)$ and reflexify, we obtain a direct sum decomposition
    \[
    F^{e_0}_*R(-p^{e_0}(p^e-1)K_R)\cong F^{e_0}_*R\cong R((p^e-1)K_R)^{\oplus [k^{1/p^{e_0}}:k]}\oplus -.
    \]
    If $R((p^e-1)K_R)\not\cong R$, i.e., $(p^e-1)K_R\not\sim 0$, then by \cref{thm:direct sums} there exists a direct sum decomposition
    \[
    F^{e_0}_*R\cong R^{\oplus [k^{1/p^{e_0}}:k]}\oplus R(-(p^e-1)K_R)^{\oplus [k^{1/p^{e_0}}:k]}\oplus -.
    \]
    If we apply $\Hom_R(-,R)$ to a direct sum decomposition $F^e_*R\cong R^{\oplus [k^{1/p^{e}}:k]}\oplus -$, then the standard chain of isomorphisms
    \begin{align*}
        \Hom_R(F^e_*R,R) &\cong \Hom_R(F^e_*R(p^eK_R),R(K_R))\\
        & \cong F^e_*\Hom_R(R(p^eK_R), R(K_R))\\
        & \cong F^e_*R(-(p^e-1)K_R),
    \end{align*}
    yields a direct sum decomposition
    \[
    F^e_*R(-(p^e-1)K_R) \cong  \Hom_R(R^{\oplus [k^{1/p^{e}}:k]}\oplus -, R) \cong R^{\oplus [k^{1/p^{e}}:k]}\oplus -.
    \]
    Consequently, $F^{e+e_0}_*R$ has a direct sum decomposition
    \begin{align*}
        F^{e+e_0}_*R & \cong F^e_*F^{e_0}_*R \\
        & \cong F^e_*\left(R^{\oplus [k^{1/p^{e_0}}:k]}\oplus R(-(p^e-1)K_R)^{\oplus [k^{1/p^{e_0}}:k]}\oplus -\right) \\
        & \cong F^e_*R^{\oplus [k^{1/p^{e_0}}:k]} \oplus F^e_*R(-(p^e-1)K_R)^{\oplus [k^{1/p^{e_0}}:k]}\oplus -\\
        & \cong R^{\oplus [k^{1/p^{e}}:k][k^{1/p^{e_0}}:k]}\oplus R^{\oplus [k^{1/p^{e}}:k][k^{1/p^{e_0}}:k]}\oplus -\\
        & = R^{\oplus 2[k^{1/p^{e+e_0}}:k]}\oplus -.
    \end{align*}
    That is, $F^{e+e_0}_*R$ has at least $2[k^{1/p^{e+e_0}}:k]$ free summands, contradicting that $F^{e+e_0}_*R$ only has $[k^{1/p^{e+e_0}}:k]$ free summands for all $e+e_0\geq 1$. Therefore if $e$ is sufficiently large and divisible, then $-(p^e-1)K_R\sim 0$, i.e., $K_R$ has index relatively prime to $p$.

    To show that $(p-1)K_R\sim 0$ we choose $e\geq 2$ so that $(p^e-1)K_R\sim 0$, i.e., $p^eK_R\sim K_R$. We then consider a direct sum decomposition of $F^{e-1}_*R\cong R^{\oplus [k^{1/p^{e-1}}:k]}\oplus -$, tensor with $R(pK_R)$, and then reflexify to find a direct sum decomposition
    \[
    F^{e-1}_*R(p^eK_R)\cong F^{e-1}_*R(K_R)\cong R(pK_R)^{\oplus [k^{1/p^{e-1}}:k]}\oplus -.
    \]
    We then apply $\Hom_R(-,R(K_R))$ to the above direct sum decomposition to find that
    \[
    \Hom_R(F^{e-1}_*R(K_R),R(K_R))\cong F^{e-1}_*R\cong R(-(p-1)K_R)^{\oplus [k^{1/p^{e-1}}:k]}\oplus -.
    \]
    If $R(-(p-1)K_R)\not\cong R$, i.e., $(p-1)K_R\not\sim 0$, then by \cref{thm:direct sums} once more there is a direct sum decomposition
    \[
    F^{e-1}_*R\cong R^{\oplus [k^{1/p^{e-1}}:k]}\oplus R(-(p-1)K_R)^{\oplus [k^{1/p^{e-1}}:k]}\oplus -.
    \]
    As above, $F_*R(-(p-1)K_R)\cong R^{\oplus [k^{1/p}:k]}\oplus -$ has $[k^{1/p}:k]$ free $R$-summands, hence
    \begin{align*}
        F^e_*R & \cong F_*F^{e-1}_*R \\
        & \cong F_*\left(R^{\oplus [k^{1/p^{e-1}}:k]}\oplus R(-(p-1)K_R)^{\oplus [k^{1/p^{e-1}}:k]}\oplus -\right)\\
        & \cong F_*R^{\oplus [k^{1/p^{e-1}}:k]}\oplus F_*R(-(p-1)K_R)^{\oplus [k^{1/p^{e-1}}:k]}\oplus -\\
        & \cong R^{\oplus [k^{1/p^{e-1}}:k][k^{1/p}:k]}\oplus R^{\oplus [k^{1/p^{e-1}}:k][k^{1/p}:k]}\oplus -\\
        & = R^{\oplus 2[k^{1/p^{e}}:k]} \oplus -.
    \end{align*}
    Therefore $F^e_*R$ admits at least $2 [k^{1/p^{e}}:k]$ free $R$-summands, a contradiction as $F^e_*R$ only admits $[k^{1/p^{e}}:k]$ free summands for all $e\in\NN$. Therefore if $n$ is the torsion index of $K_R$, then $(p-1)K_R\sim 0$ implies $n|p-1$. Equivalently, $p\equiv 1\pmod{n}$.
\end{proof}
A curious corollary is that in the $\Q$-Gorenstein scenario, the hypotheses of \cref{thm: Bertini for F-rational singularities} can only be satisfied in odd characteristic:
\begin{corollary}\label{cor:f-rational-not-f-regular}
    Suppose that $R$ satisfies the hypotheses of \cref{prop: char cong to 1 mod p} and additionally is $F$-rational. Then $p>2$. In particular, if $(R,\fm)$ is a two dimensional $F$-finite local ring of characteristic two which is $F$-pure and $F$-rational, then $R$ is $F$-regular.
\end{corollary}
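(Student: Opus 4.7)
The plan is to derive the first sentence directly from \cref{prop: char cong to 1 mod p} combined with the classical equivalence of $F$-rationality and $F$-regularity in the Gorenstein setting, and then to reduce the ``in particular" clause to the first sentence via standard results on two-dimensional $F$-rational singularities.

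For the first sentence, I would apply \cref{prop: char cong to 1 mod p} to obtain $p \equiv 1 \pmod{n}$. Assuming for contradiction that $p = 2$, this congruence forces $n \mid 1$, so $n = 1$ and $K_R \sim 0$, i.e., $R$ is Gorenstein. A Gorenstein $F$-rational local ring is strongly $F$-regular (in the Gorenstein case the test ideal and the parameter test ideal coincide, so the two notions collapse to one), which contradicts the hypothesis that $R$ is not $F$-regular. Hence $p > 2$.

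For the ``in particular" clause, I would assume for contradiction that a two-dimensional $F$-finite $F$-pure $F$-rational ring $R$ of characteristic two is not $F$-regular, and verify every hypothesis of \cref{prop: char cong to 1 mod p} so as to invoke the first part. $F$-rationality gives that $R$ is a normal Cohen--Macaulay domain. Since $F$-rationality localizes in the $F$-finite case and every $F$-rational local ring of dimension $\le 1$ is regular (a field in dimension zero, and a DVR in dimension one because normality is automatic), $R_\fp$ is $F$-regular for every non-maximal $\fp \in \Spec(R)$. To handle the $\Q$-Gorenstein assumption, I would use that two-dimensional $F$-rational singularities are rational, and invoke Lipman's theorem that a two-dimensional normal local ring with rational singularities is $\Q$-factorial; this forces $K_R$ to be torsion in $\Cl(R)$ and so $R$ is $\Q$-Gorenstein. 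The first part then yields $p > 2$, contradicting $\Char R = 2$.

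The main obstacle is the $\Q$-Gorenstein reduction in the two-dimensional case, which relies on Lipman's $\Q$-factoriality theorem for rational surface singularities (valid in all characteristics). The remaining verifications are routine consequences of the openness and localization of the $F$-rational locus and of the collapse of $F$-rationality to $F$-regularity under the Gorenstein hypothesis.
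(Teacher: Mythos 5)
Your proof is correct and follows essentially the same route as the paper's: the first sentence via the $n=1$ reduction and the collapse of $F$-rational to $F$-regular in the Gorenstein case, and the ``in particular'' clause by verifying the hypotheses of \cref{prop: char cong to 1 mod p} with the $\Q$-Gorenstein condition coming from Lipman's work on rational surface singularities. The one spot you elide slightly is the passage from ``$F$-rational'' to ``rational'': the paper first uses Smith's result that $F$-rational implies pseudo-rational, then uses that $F$-finite rings are excellent (Kunz) and that excellent surfaces admit resolutions (Lipman), which is what promotes pseudo-rational to an honest rational singularity before Lipman's finiteness (or $\Q$-factoriality) of the class group can be applied; you should keep that chain explicit in a final write-up, but it does not change the structure of the argument.
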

\begin{proof}
    The first statement follows at once from \cref{prop: char cong to 1 mod p} and from the equivalence between $F$-regularity and $F$-rationality in the Gorenstein setting \cite[Theorem 4.2(g)]{HH94a}. To see the second statement, note that $R$ is normal by \cite[Theorem 4.2(b)]{HH94a} and therefore has an isolated singularity. An $F$-rational ring $R$ has pseudo-rational singularities by \cite{Smi97}. $F$-finite rings are excellent by \cite{Kun76}, and excellent surfaces admit resolutions of singularities by \cite{Lip78}; therefore $R$ is a two-dimensional rational singularity, and the divisor class group of $R$ is finite by results of \cite{Lip69}. In particular, the canonical divisor class is torsion (i.e., $R$ is $\Q$-Gorenstein) and the result follows from \cref{prop: char cong to 1 mod p} once more.
\end{proof}

\begin{remark}\cref{prop: char cong to 1 mod p} should be compared with Watanabe's classification of $F$-singularities of surfaces \cite{Wat91}. To summarize, let $R = \bigoplus_{n\geq 0}R_n$ be a two-dimensional $\N$-graded normal domain with degree zero part $R_0 = k$ a field of characteristic $p>0$ which is $F$-pure, $F$-rational, but not $F$-regular. It is proven in \cite[Theorem 4.2(2ii)]{Wat91} that $R$ arises as the section ring $$R\cong \bigoplus_{n\geq 0}H^0(\PP^1_k,\cO_{\PP^1_k}(nD)) T^n$$ where $D$ is among a small list of effective divisors on $\PP^1_k$ with rational coefficients with an associated list of allowable characteristics. Since they are $F$-rational, the section rings are $\Q$-Gorenstein, see proof of \cref{cor:f-rational-not-f-regular}, and moreover have isolated singularities by Serre's criterion for normality. It may be checked that the restrictions on the characteristic $p$ exactly match those of \cref{prop: char cong to 1 mod p} with respect to the index.
\end{remark}

\printbibliography

\end{document}